\newtheorem{bigthm}{Theorem}   % Numbered separately, as A, B, etc
\renewcommand{\thebigthm}{\Alph{bigthm}}  % Number as "Theorem A."
\newcommand{\sumtwo}[2]{\sum_{\substack{#1 \\ #2}}} % sum with 2 lines
\newcommand{\abs}[1]{\left| #1\right|}
\newcommand{\calA}{\mathcal{A}}
\newcommand{\calF}{\mathcal{F}}
\newcommand{\calH}{\mathcal{H}}
\newcommand{\calI}{\mathcal{I}}
\newcommand{\calS}{\mathcal{S}}
\newcommand{\calT}{\mathcal{T}}
\newcommand{\calU}{\mathcal{U}}
\newcommand{\calY}{\mathcal{Y}}
\newcommand{\bbC}{\mathbb{C}}
\newcommand{\bbE}{\mathbb{E}}
\newcommand{\bbL}{\mathbb{L}}
\newcommand{\bbN}{\mathbb{N}}
\newcommand{\bbP}{\mathbb{P}}
\newcommand{\bbQ}{\mathbb{Q}}
\newcommand{\bbR}{\mathbb{R}}
\newcommand{\bbV}{\mathbb{V}}
\newcommand{\bbZ}{\mathbb{Z}}
\newcommand{\sfe}{{\sf e}}
\newcommand{\sfp}{{\sf p}}
\renewcommand{\emptyset}{\varnothing}
\newcommand{\Zd}{\bbZ^d}
\newcommand{\Zdo}{\bbZ^{d+1}}
\newcommand{\setof}[2]{\left\{#1 \,:\, #2 \right\}}
\newcommand{\given}{\,|\,}
\newcommand{\lb}{\left(}
\newcommand{\rb}{\right)}
\newcommand{\lbr}{\left\{}
\newcommand{\rbr}{\right\}}
\newcommand{\dd}{{\rm d}}
\newcommand{\case}[1]{C{\small ASE}\,#1.}
\newcommand{\Var}{\bbV{\rm ar}}
\newcommand{\tq}[1]{t^\omega_{#1}}
\newcommand{\fq}[1]{f^\omega_{#1}}
\newcommand{\fxq}[1]{f^{\theta_x \omega}_{#1}}
\newcommand{\fxpq}[1]{f^{\theta_{x^\prime} \omega}_{#1}}
\newcommand{\ta}[1]{{\mathbf t}_{#1}}
\newcommand{\fa}[1]{{\mathbf f}_{#1}}
\newcommand{\RWP}{\bbP_{\scriptscriptstyle\rm RW}}
\newcommand{\RWPC}{\bbP_{\scriptscriptstyle\rm RW}^\otimes}
\newcommand{\RWEC}{\bbE_{\scriptscriptstyle\rm RW}^\otimes}
\newcommand{\RWX}{\mathbf{X}}
\newcommand{\RWL}{\mathbf{L}}
\newcommand{\qS}[2]{\calS_{#1}^{#2}} 
\newcommand{\aS}[1]{{\mathbf S}_{#1}} 
\newcommand{\cHm}[1]{\calH^-_{#1}}
\newcommand{\One}{\mathbbm{1}}
\newcommand{\smo}[1]{{\mathrm o}\lb #1\rb }
\newcommand{\df}{\stackrel{\Delta}{=}}
\newcommand{\eqvs}{\stackrel{\sim}{=}}
\newcommand{\leqs}{\lesssim}            % Less up to a constant
\newcommand{\geqs}{\gtrsim}             % Greater up to a constant
\newcommand{\xpr}{x^\prime}
\newcommand{\ypr}{y^\prime}
\newcommand{\mpr}{m^\prime}
\newcommand{\Xpr}{X^\prime}
\newcommand{\Lpr}{L^\prime}
\newcommand{\kpr}{k^\prime}
\newcommand{\RWXpr}{\RWX^\prime}
\newcommand{\RWLpr}{\RWL^\prime}
\newcommand{\be}{\begin{equation}}
\newcommand{\ee}{\end{equation}}
\begin{document}

%\maketitle

\section{Introduction and Results}

Directed polymers in random media were introduced in~\cite{HH85} as an effective
model of Ising interfaces in systems with random impurities. The precise
mathematical formulation appeared in the seminal paper~\cite{ImSp88}, which
triggered a wave of subsequent investigations. The model of directed polymers
can be described as follows. Let $\eta=(\eta_k)_{0\leq k\leq n}$ be a
nearest-neighbour path on $\Zd$ starting at $0$, and let
$\gamma=(\gamma_k)_{0\leq k\leq n}$ with $\gamma_k=(k,\eta_k)$ be the
corresponding directed path in $\Zdo$. Let also $\{V(x)\}_{x\in\Zdo}$ be a
collection of i.i.d.\ random variables with finite exponential moments, whose
joint law is denoted by $\bbP$. One is then interested in the behaviour of the
path $\gamma$ under the random probability measure
\[
\mu_n^\omega(\gamma) = (Z_{n;\beta}^\omega)^{-1}\, \exp\bigl(-\beta \sum_{k=1}^n
V(\gamma_k)\bigr)\, (2d)^{-n},
\]
where $\beta\geq 0$ is the inverse temperature. The behaviour of the path
$\gamma$ is closely related to the behaviour of the partition function
$Z_{n;\beta}^\omega$. Namely, one distinguishes between two regimes: the
weak disorder regime, in which $\lim_{n\to\infty}
Z_{n;\beta}^\omega/\bbE(Z_{n;\beta}^\omega) > 0$, $\bbP$-a.s., and the strong
disorder regime, in which this limit is zero. It is known~\cite{CoYo06} that
there is a sharp transition between these two regimes at an inverse temperature
$\beta_c$ which is non-trivial when $d\geq 3$. In the weak disorder regime 
({$\beta <\beta_c$}), the
path $\gamma$ behaves diffusively, in that $\gamma_n$ satisfies a CLT.
Diffusivity at sufficiently small values of $\beta$ was first established
in~\cite{ImSp88}; this was extended to an almost-sure CLT in~\cite{Bo89}; a CLT
(in probability) valid in the whole weak disorder regime was then obtained
in~\cite{CoYo06}.  

In dimensions $d\geq 3$ the sequence 
$Z_{n;\beta}^\omega/\bbE(Z_{n;\beta}^\omega)$
is bounded in $\bbL_2$ for all sufficiently small values of $\beta$. In such a
situation local limit versions of the CLT, which hold in probability, were
established in~\cite{Sinai,Vargas}. 

In the case of directed polymers the disorder is always strong in dimensions
$d=1,2$~\cite{CV06, Lacoin} and at sufficiently low temperatures. Concerning the
(nondiffusive) behaviour in the strong disorder regime, we refer the reader
to~\cite{CC11} and references therein.

In this work, we consider diffusive behaviour in dimensions $d+1\geq 4$ for
the related models of \emph{stretched polymers}.
{The choice of notation $d+1$ indicates that stretched polymers on $\Zdo$ should be 
compared with directed polymers in $d$ dimensions. }
% In this case, the polymer
However, a stretched 
path $\gamma$ can be any nearest-neighbour path on $\Zdo$, which is permitted to
bend and to return to particular vertices an arbitrary number of times. The
disorder is modelled  by a collection $\lbr V(x)\rbr_{x\in\Zdo}$ of i.i.d.\
non-negative random variables. Each visit of the path to a vertex $x$ exerts
the price ${\rm e}^{-\beta V (x)}$. The \emph{stretch} is introduced in
one of the following two natural ways:
\begin{itemize}
\item The path $\gamma$ starts at $0$ and ends at a hyperplane at distance $n$
from $0$ and has arbitrary length. This is a model of crossing random walks in
random potentials. In dimension $d+1=2$, it presumably provides a better
approximation to Ising interfaces in the presence of random impurities. 
\item The path $\gamma$ has a fixed length $n$, but it is subject to a drift,
which can be interpreted physically as the effect of a force acting on the
polymer's free end. 
\end{itemize}
The precise model is described below. At this stage let us remark that models of
stretched polymers have a richer morphology than models of directed polymers.
Even the issue of ballistic behaviour for annealed models is 
non-trivial~\cite{IoffeVelenik-Annealed, KM11, IVCritical}. The issue of
ballistic behaviour in the quenched case is still not resolved completely, and,
in order to ensure ballisticity one needs to assume that the random potential
$V$ is strictly positive in the crossing case, and that the applied drift is 
sufficiently large in the fixed length case. Both conditions are designed to
ensure a somewhat massive nature  of the model.

As in the directed case, the disorder is always strong~\cite{Zy-10} in low
dimensions $d+1= 2, 3$ or at sufficiently low temperatures. 

In the case of higher dimensions $d+1\geq 4$, the existence of weak disorder on
the level of equality between quenched and annealed free energies was
established in~\cite{Flury,Zygouras}. 
The case of high temperature discrete Wiener sausage with drift was addressed 
in~\cite{ErwinStudent}.

In the crossing case, a CLT in probability was established in~\cite{IV-Crossing}
in all dimensions $d+1\geq 4$ at sufficiently high temperatures.

The aim of the present paper is to establish an almost-sure CLT for the endpoint
of the fixed-length version of the model of stretched polymers with non-zero
drifts, also at sufficiently high temperatures and in all dimensions $d+1\geq
4$.

\subsection{Class of Models}
\noindent
\textbf{Polymers.}
For the purpose of this paper, a polymer $\gamma = (\gamma_0, \dots, \gamma_n)$ is a nearest-neighbour trajectory on the
integer lattice $\Zdo$. Unless stressed otherwise, $\gamma_0$ is always placed at the origin. The length of the polymer
is $ \abs{\gamma}\df n$ and its spatial extension is $X (\gamma ) \df \gamma_n - \gamma_0$.  In the most general case,
neither the length nor the spatial extension are fixed.
\smallskip

\noindent
\textbf{Random Environment.} The random environment is a collection $\lbr V(x)\rbr_{x\in\Zdo}$ of non-degenerate
non-negative i.i.d.\ random variables which are normalised by $0\in {\rm supp}(V)$. There is no moment assumptions on
$V$. The case of traps, $p_\infty \df \bbP\lb V=\infty \rb >0$, is  not excluded, but then we shall assume that
$p_\infty$ is small enough.  In particular, we shall assume that $\bbP$-a.s.\ there is an  
infinite connected cluster
${\rm Cl}_\infty (V)$ of the set $\lbr x : V (x) <\infty\rbr$ in $\Zdo$. In fact, we shall 
assume more: Given
$\bbR^{d+1}\ni h\neq 0$ and a number $\delta \in  {(0, \frac{1}{\sqrt{d+1}} )}$, define 
the positive cone
\be
\label{eq:dhcone}
\calY^h_\delta  \df \setof{x\in\bbR^{d+1}}{x\cdot h\geq \delta\abs{x}\abs{h}} .
\ee
By construction, the cones $\calY^h_\delta$ always contain at least one lattice direction $\pm \sfe_i ,\ i=1, \dots,
d+1$.  We assume that  it is possible to choose $\delta$  in such a fashion that, for any $h$, the intersection ${\rm
Cl}_\infty^{h, \delta}(V)\df  {\rm Cl}_\infty (V)\cap\calY^h_\delta$ contains ($\bbP$-a.s.) an infinite connected
component. For the rest of the paper, we fix such a $\delta\in 
{(0, \frac{1}{\sqrt{d+1}} )}
%(\frac{1}{\sqrt{d+1}} , 1)
$ and use the reduced notation
$\calY^h$ and ${\rm Cl}_\infty^{h}(V)$ for the corresponding
cones~\eqref{eq:dhcone} and percolation clusters.

\smallskip
\noindent
\textbf{Weights and Path Measures.} The reference measure $\sfp (\gamma ) \df (2(d+1))^{-\abs{\gamma}}$ is given by
simple random walk weights. The
polymer weights we are going to consider are quantified by
two parameters: the inverse temperature $\beta\geq 0$ and the external pulling
force $h\in\bbR^{d+1}$.

\noindent
The random quenched weights are given by
\be
\label{eq:qweight}
q_{h,\beta }^\omega (\gamma) \df \exp\Bigl\{  h\cdot X (\gamma)
- \beta\sum_{1}^{\abs{\gamma}} V (\gamma_i)\Bigr\}  \sfp (\gamma).
\ee
The corresponding deterministic annealed weights are given by
\be
\label{eq:aweight}
q_{h,\beta} (\gamma) \df
\bbE q_{h,\beta}^\omega (\gamma)  =
\exp\lbr h\cdot X (\gamma)
-
\Phi_\beta (\gamma)
\rbr \sfp (\gamma) ,
\ee
where $\Phi_\beta (\gamma) \df \sum_x \phi_\beta \bigl(\ell_\gamma(x) \bigr)$, with $\ell_\gamma(x)$ denoting the local
time (number of visits) of $\gamma$ at $x$, and
\begin{equation}
\label{eq:phibeta}
\phi_\beta (\ell) \df -\log\bbE {\rm e}^{-\beta \ell V} .
\end{equation}
Note that the annealed potential is positive, non-decreasing and 
attractive, in the sense that
\be
\label{eq:attractive}
0 < \phi_\beta (\ell ) \leq 
 \phi_\beta (\ell +m ) \leq \phi_\beta (\ell ) +\phi_\beta (m ),\qquad\forall\, 
\ell,m\in\bbN .
\ee
\smallskip
\noindent
In the sequel, we shall drop the index $\beta$ from the notation, and we shall
drop the index $h$ whenever it equals zero. With this convention, the quenched
partition functions are defined by
\begin{equation}
\label{eq:pf}
Q_n^\omega (x)  \df \sumtwo{X (\gamma)=x}{|\gamma |=n} q^\omega (\gamma) , \ \
Q_n^\omega (h ) \df \sum_{|\gamma | = n} q_h^\omega  (\gamma ) =
\sum_x {\rm e}^{h\cdot x} Q_n^\omega (x) ,
\end{equation}
and we use $Q_n (x) \df \bbE Q_n^\omega (x)$
and $Q_n (h) \df \bbE Q_n^\omega (h)$ to denote their annealed counterparts.

Finally, we define the corresponding quenched and annealed path measures by
\begin{equation}
\label{eq:nMeasures}
\bbQ_{n, h}^\omega  (\gamma ) \df \One_{\lbr \abs{\gamma} = n\rbr} 
\frac{q_h^\omega (\gamma )}{Q_n^\omega (h)}\quad
\text{and}\quad \bbQ_{n,h}  (\gamma ) \df \One_{\lbr \abs{\gamma} = n\rbr} \frac{q_h (\gamma )}{Q_n (h)} .
\end{equation}
\noindent
\textbf{Very Weak Disorder.}
The notion of very weak disorder is {technical} and 
it depends on the strength $|h|$ of the 
pulling force , dimension $d\geq 3$ and 
the distribution of $V$. 
{
By Lemma~\ref{lem:WDBound} below, there exists a function $\zeta_d$ on 
$(0,\infty )$ such that a certain $\bbL_2$-estimate \eqref{eq:WDBound}
holds if $\phi_\beta (1) < \zeta_d (| h| )$. 
}
\begin{definition}
The model of stretched polymers is in the regime of very weak disorder if 
$d\geq 3$ and 
\begin{equation}
\label{eq:veryweak}
\phi_\beta (1) < \zeta_d (| h| ) .
\end{equation}
\end{definition}

\subsection{The Result}
\label{sub:result}
Fix $h\neq 0$. Then~\cite{Zerner, Flury-LD, IoffeVelenik-Annealed}
\be
 \label{eq:lambda}
\lambda = \lambda (\beta ,h ) \df   \lim_{n\to\infty}\frac1n \log Q_n (h) \in
(0,\infty ) ,
\ee
for all sufficiently small $\beta$. The following two quantities play a
central role in our limit theorems:
\[
v = v (h,\beta) \df \nabla\lambda (h),\qquad
\Sigma \df {\rm Hess }[\lambda] (h).
\]
 {If $\beta$ is sufficiently small then  
$v\neq 0$ and the
matrix $\Sigma$ is positive definite and, moreover, 
 $v$ and $\Sigma$ are the limiting spatial extension and, 
respectively, the diffusivity matrix for the annealed model. (Sections~4.1,4.2 
 in \cite{IoffeVelenik-Annealed}).
In Subsection~\ref{sub:Three} we recall further relevant facts about the annealed model. }
\begin{bigthm}
 \label{thm:A}
Fix $h\neq 0$. Then, in the regime of very weak disorder,
the following holds $\bbP$-a.s.\ on the event $\lbr 0\in {\rm Cl}_\infty
(V)\rbr$:
\begin{itemize}
\item  The limit
\be
\label{eq:Aclaima}
 \lim_{n\to\infty} \frac{Q_n^\omega  (h)}{Q_n (h)}
\ee
exists and is a strictly positive, square-integrable random variable.
\item 
{There exists a sequence $\{\epsilon_n\}$ with $\lim\epsilon_n = 0$, 
such that 
%$\qquad\bbQ_{n, {h}}^\omega\text{-almost
%surely}$
\be
\label{eq:Aclaimb}
%\lim_{n\to\infty}
\sum_n \bbQ_{n,h}^\omega \Bigl( \bigl| \frac{ X(\gamma)}{n} - v \bigr| 
> \epsilon_n \Bigr) < \infty .
\ee
}
\item 
For every $\alpha\in \bbR^{d+1}$,
\be
\label{eq:Aclaimc}
 \lim_{n\to\infty} \bbQ_{n, {h}}^\omega \Bigl( \exp
\bigl\{ \frac{i\alpha}{\sqrt{n}} (X(\gamma )-nv) \bigr\}\Bigr) =
\exp\bigl\{ -\tfrac12 \Sigma\alpha\cdot\alpha \bigr\} .
\ee
\end{itemize}
\end{bigthm}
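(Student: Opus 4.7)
The strategy is to lift the annealed theory of \cite{IoffeVelenik-Annealed} to the quenched level through an $\bbL_2$-martingale analysis based on the cone-point renewal representation of stretched polymers. First I would recall the decomposition of a polymer $\gamma$ into an initial backscattering piece, a sequence of irreducible pieces joining successive $h$-cone points (in the spirit of Ornstein--Zernike), and a terminal piece. As established in~\cite{IoffeVelenik-Annealed, IV-Crossing}, this expresses $Q_n(h)$ as an effective renewal sum driven by a random walk with transition $\Ka$ having exponentially decaying tails, drift $v$, and covariance $\Sigma$. In the quenched setting the same decomposition produces a random transition $\Kq$; crucially the irreducible pieces depend on disjoint portions of the environment, so the $\bbL_2$-bound~\eqref{eq:WDBound} becomes a tractable input.

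For claim~\eqref{eq:Aclaima}, set $W_n^\omega \df Q_n^\omega(h)/Q_n(h)$. The cone-point decomposition identifies a natural filtration $(\calF_k)$ along which $W_n^\omega$ is, up to controlled boundary pieces, a positive $\calF$-martingale whose second moment is bounded by Lemma~\ref{lem:WDBound}: $\sup_n \bbE[(W_n^\omega)^2] < \infty$. Doob's convergence theorem then yields an a.s.\ and $\bbL_2$-limit $W_\infty^\omega$ with $\bbE[W_\infty^\omega]=1$. Strict positivity on $\{0\in\mathrm{Cl}_\infty(V)\}$ would follow from a standard zero--one argument combined with the hypothesis that an infinite connected cluster of finite-potential sites exists inside $\calY^h_\delta$.

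For the quenched concentration~\eqref{eq:Aclaimb}, I would apply the same $\bbL_2$-bound to the restricted partition function $Q_n^\omega(h;\,|X(\gamma)-nv|>n\epsilon_n)$ and combine it with the annealed moderate-deviation estimate $Q_n(h;\,|X-nv|>n\epsilon) \leq C\,e^{-cn\epsilon^2}Q_n(h)$, which follows from exponential concentration of the renewal walk driven by $\Ka$. This yields
\[
\bbE\bigl[(W_n^\omega)^2\,\bbQ_{n,h}^\omega(|X/n-v|>\epsilon_n)^2\bigr] \leq C\,e^{-c' n \epsilon_n^2}.
\]
Choosing for example $\epsilon_n = n^{-1/4}$ makes this summable, and since $W_n^\omega \to W_\infty^\omega > 0$ a.s.\ on $\{0\in\mathrm{Cl}_\infty(V)\}$, a Markov--Borel--Cantelli argument delivers~\eqref{eq:Aclaimb}.

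The final step, the quenched CLT~\eqref{eq:Aclaimc}, is the hard part. Writing $G_n^\omega(\alpha) = \bbQ_{n,h}^\omega[\exp(i\alpha\cdot(X-nv)/\sqrt n)]$ and $g_n(\alpha)$ for its annealed counterpart, the target is
\[
\bbE\bigl[(W_n^\omega)^2\,|G_n^\omega(\alpha)-g_n(\alpha)|^2\bigr] \longrightarrow 0
\]
for each fixed $\alpha$, after which convergence in $\bbP$-probability is immediate from $W_n^\omega \to W_\infty^\omega > 0$ together with $g_n(\alpha)\to \exp(-\tfrac12 \Sigma\alpha\cdot\alpha)$ (annealed CLT). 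Almost-sure convergence is then promoted to the full sequence by passing to a sparse subsequence $n_k = \lfloor k^3\rfloor$ and interpolating via~\eqref{eq:Aclaimb}. The main obstacle is the variance estimate itself: one must combine the oscillatory factor $\exp(i\alpha\cdot \xi/\sqrt n)$ attached to each irreducible displacement $\xi$ with the weak-disorder $\bbL_2$-bound uniformly in $\alpha$ at the diffusive scale $n^{-1/2}$. The cone-point representation is precisely what makes this tractable, because the successive $\xi_k$ act on disjoint portions of $\omega$ and the computation reduces to an estimate on independent tilted transitions $\Kq$ with the phase factor inserted perturbatively.
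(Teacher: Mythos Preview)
Your outline identifies the right ingredients (cone-point/Ornstein--Zernike decomposition, the $\bbL_2$-estimate of Lemma~\ref{lem:WDBound}, annealed asymptotics), but the central mechanism you propose for~\eqref{eq:Aclaima} does not work as stated. You write that, up to boundary pieces, $W_n^\omega$ is a positive martingale along a natural filtration, and then invoke Doob. This is precisely the structure that \emph{fails} for stretched polymers: unlike directed polymers, the $n$-step path is not confined to a slab determined by $n$, and the $k$-th irreducible piece does not sit in a region measurable with respect to a filtration indexed by $n$ (or by $k$). The paper says this explicitly: ``Unlike directed polymers, stretched polymers do not possess natural martingale structures.'' The substitute is McLeish's maximal inequality for \emph{mixingales}: one works with the spatial filtration $\calA_m=\sigma\{V(x):x\cdot\sfe_1\le m|v|\}$, writes the target quantities in the form $\sum_\ell Z_\ell^{(n)}$ as in~\eqref{eq:Znl}, and uses~\eqref{eq:WDBound} to verify the two-sided conditional-variance bounds~\eqref{eq:McLeish}. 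Doob's theorem is never available; convergence is extracted from~\eqref{eq:Maximal} and summability of $\psi_\ell^2$.

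A second, smaller gap: your route to the almost-sure CLT is ``prove convergence in $\bbP$-probability, pass to $n_k=\lfloor k^3\rfloor$, interpolate via~\eqref{eq:Aclaimb}''. The interpolation is not free, because the phase $\alpha/\sqrt n$ itself varies with $n$; controlling $G_{N+r}^\omega(\alpha)-G_N^\omega(\alpha)$ uniformly over $r\le R$ requires exactly the kind of fluctuation bounds (on differences of the coefficient arrays $a_{x,\ell}^{(N+r)}-a_{x,\ell}^{(N)}$) that the paper obtains in Section~\ref{sec:proofeta} via McLeish's inequality and the union bound over $r$. The LLN~\eqref{eq:Aclaimb} alone does not control these oscillations. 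Finally, for~\eqref{eq:Aclaimb} the paper does not use the $\bbL_2$-machinery at all: it is a direct consequence of the Flury--Zygouras coincidence of free energies~\eqref{eq:FZ} together with strict convexity of the annealed rate function from~\cite{IoffeVelenik-Annealed}.
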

{We would like to stress  that, in contrast to the case of directed 
polymers~\cite{CoYo06}, our CLT does not pertain to the whole of the weak 
disorder region. 
The procedure of first fixing $h\neq 0$ and then going to $\beta >0$ sufficiently small is essential.
Furthermore, even in the regime we are working with, \eqref{eq:Aclaimc} 
 falls short of the local CLT form of results as developed for directed polymers in \cite{Vargas}.
 These and related issues remain open in the context of stretched polymers.}  

{
Few remarks on the history of the problem:}
  Flury \cite{Flury} had established that under the conditions of
Theorem~\ref{thm:A} (and some additional moment
assumptions of the potential $V$)
\be
\label{eq:FZ}
\lim_{n\to\infty} \frac1n\log  \frac{Q_n^\omega  (h)}{Q_n (h)} = 0
\ee
for on-axis exterior forces $h$. \eqref{eq:FZ} was then extended to arbitrary
directions $h\in\bbR^{d+1}$ by Zygouras~\cite{Zygouras}. In~\cite{Flury}, the
analysis was carried out directly in the canonical ensemble of polymers with
fixed length $n$.  In~\cite{Zygouras}, the author derives results for the
conjugate ensemble of the so-called crossing
random walks.

Large deviations {(LD)} under both $\bbQ_{n,h}$ and $\bbQ_{n,h}^\omega$ 
were investigated in~\cite{Zerner, Flury-LD}. The results therein
imply that, under the conditions of Theorem~\ref{thm:A}, the model is ballistic in the sense that the value of the
quenched rate function at zero is strictly positive. However, \cite{Zerner, Flury-LD} do not imply a 
{law of large numbers}
(LLN) even in the
annealed case. In particular, these works do not contain information on the strict convexity of the corresponding rate
functions. The issue of strict convexity for the annealed rate functions was settled in~\cite{IoffeVelenik-Annealed}.
Therefore, \eqref{eq:Aclaimb} is a direct consequence of~\eqref{eq:FZ} and of the analysis of annealed canonical
measures in~\cite{IoffeVelenik-Annealed}.

The main new results  of this work are~\eqref{eq:Aclaima} and~\eqref{eq:Aclaimc}. A version of Theorem~\ref{thm:A} for
the ensemble of crossing random walks appears in~\cite{IV-Crossing}. The length
of crossing random walks is not fixed (only suppressed by an additional positive
mass), and they are required to have their second endpoint on a distant
hyperplane.  In this way, crossing random walks in random potential are much more ``martingale''-like 
than canonical
random walks. Moreover, the canonical constraint of fixed length does not facilitate computations, 
to say the
least.  
Finally, the CLT of~\cite{IV-Crossing} was only established in probability
and not $\bbP$-a.s.
Thus, although the techniques developed in~\cite{IV-Crossing} are useful here, they certainly do not imply
the claims of Theorem~\ref{thm:A}, and an alternative approach was required.

\subsection{Irreducible Decomposition, {Basic Ensembles} and Basic Partition Functions}

A polymer $\gamma = (\gamma_0 , \dots , \gamma_n )$ is said to be cone-confined if
\be
\label{eq:cconfined}
\gamma\subset \lb \gamma_0 +\calY^h \rb \cap\lb \gamma_n - \calY^h\rb .
\ee
A cone-confined polymer which cannot be represented as the concatenation of two (non-singleton) cone-confined polymers
is said to be irreducible. We denote by $\calT (x)$ the collection of all cone-confined paths 
leading from $0$ to $x$,
and by $\calF (x)\subset\calT (x)$ the set of irreducible cone-confined paths.
{In the sequel we shall refer to $\calF (x)$ and $\calT (x)$ 
as to basic ensembles}. 
The basic partition functions are defined by
\be
\label{eq:weights}
 \tq{x,n} \df {\rm e}^{-\lambda n}
\sum_{\gamma\in\calT (x)}\One_{\lbr \abs{\gamma} =n\rbr} q_{h}^\omega (\gamma)
\quad\text{and}\quad
\fq{x ,n} \df {\rm e}^{-\lambda n} \sum_{\gamma\in{\calF (x)}}\One_{\lbr \abs{\gamma} =n\rbr} q_{h}^\omega  (\gamma) .
\ee
We also set, accordingly, $\tq{n}\df \sum_x\tq{x, n}$ and $\fq{n}\df \sum_x\fq{x,n}$. 
The annealed counterparts of all
these quantities are denoted by $\ta{x,n} \df \bbE\tq{x, n}$, $\fa{x,n } 
\df \bbE\fq{x, n}$, $\ta{n}\df\bbE\tq{n}$ and
$\fa{n} \df \bbE\fq{n}$.
As shown in {Section~3.6 of} \cite{IoffeVelenik-Annealed}, the collection $\lbr \fa{x,n}\rbr$
forms a probability distribution,
\[
\sum_{n}\sum_{x}\fa{x, n} =  \sum_n \fa{n} = 1 ,
\]
with exponentially decaying tails:
\be
\label{eq:exp-tails}
\sum_{m\geq n} \fa{m }\df \sum_{m\geq n}\sum_x \fa{x ,m} \leq {\rm e}^{- \nu n} ,
\ee
where $\nu = \nu (\beta , h)\to\infty$ as $\beta$ becomes large, {and $\inf_{\beta\geq 0} \nu(\beta,h)>0$, for
all $h\neq 0$}.
{
\begin{remark}
\label{rem:tn}
Since by definition  polymers are nearest neighbour paths, it always holds that 
$\ta{x,n} = \ta{x,n} \One_{\lbr \abs{x}\leq n\rbr}$.
\end{remark}
}
\smallskip

As in~\cite[Subsections~2.7 and 3.5]{IV-Crossing}, the following statement about basic ensembles implies the
claims~\eqref{eq:Aclaima} and~\eqref{eq:Aclaimc}  of Theorem~\ref{thm:A}:
\begin{bigthm}
\label{thm:B}
Fix $h\neq 0$. Then, in the regime of very weak disorder,
the following holds $\bbP$-a.s.\ on the event $\lbr 0\in {\rm Cl}_\infty^{h}
(V)\rbr$:
\begin{itemize}
\item The limit
\be
\label{eq:claima}
s^\omega \df \lim_{n\to\infty} \frac{\tq{n}}{\ta{n}}
\ee
exists and is a strictly positive, square-integrable random variable.
\item 
For every $\alpha\in\bbR^{d+1}$,
\be
\label{eq:claimc}
\lim_{n\to\infty}
\frac1{\tq{n}  }\sum_x\exp\bigl\{ \frac{i\alpha}{\sqrt{n}}\cdot (x-nv) 
\bigr\} \tq{x ,n}
=
\exp\bigl\{ -\tfrac12 \Sigma\alpha\cdot\alpha \bigr\} .
\ee
\end{itemize}
\end{bigthm}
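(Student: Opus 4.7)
My plan is to exploit the irreducible cone decomposition of $\calT(x)$ to recast both assertions as renewal statements, and then to close the quenched estimates via the very weak disorder $\bbL_2$-bound of Lemma \ref{lem:WDBound}. Every $\gamma\in\calT(x)$ with $|\gamma|=n$ factorises uniquely as $\gamma=\gamma^{(1)}\circ\cdots\circ\gamma^{(k)}$ with $\gamma^{(i)}\in\calF$, and consecutive irreducibles visit pairwise disjoint subsets of $\Zdo$ since they meet only at cone-points. This yields
\[
\tq{n} = \sum_{k\geq 1}\sumtwo{(x_1,n_1),\dots,(x_k,n_k)}{\sum_{i=1}^k n_i = n}\prod_{i=1}^{k}\fq{x_i,n_i}(\theta_{x_{<i}}\omega),
\qquad x_{<i}\df x_1+\cdots+x_{i-1},
\]
where, conditionally on the skeleton $(x_i,n_i)_{i\leq k}$, the factors $\fq{x_i,n_i}(\theta_{x_{<i}}\omega)$ are independent with means $\fa{x_i,n_i}$. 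The annealed $\ta{n}$ is thus the $n$-th mass of a renewal process with inter-arrival law $\fa{x,n}$, having exponential tails by \eqref{eq:exp-tails}.

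\textbf{Convergence of $\tq{n}/\ta{n}$.} Order potential cone-points along $h$ and let $\calG_K$ be the $\sigma$-field generated by the environment in the first $K$ renewal slabs (defined through stopping lines transverse to $h$). The sequence $M_K\df\bbE[\tq{n}/\ta{n}\given\calG_K]$ factors across renewals into independent unit-mean weights $\fq{x_i,n_i}(\theta\omega)/\fa{x_i,n_i}$, and its $\bbL_2$-norm is controlled by summing per-renewal variances, each bounded through Lemma \ref{lem:WDBound}, with total weight summable in the very weak disorder regime by \eqref{eq:veryweak} and \eqref{eq:exp-tails}. Uniform $\bbL_2$-boundedness of $M_K$ yields a.s.\ and $\bbL_2$ convergence, and transferring this to the fixed-length quantity $\tq{n}/\ta{n}$ costs only the incomplete last renewal -- of length $O(\log n)$ with overwhelming probability by \eqref{eq:exp-tails}, hence contributing $o(1)$ in $\bbL_2$. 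The limit $s^\omega$ is square-integrable by construction, and strict positivity on $\{0\in\mathrm{Cl}^h_\infty(V)\}$ follows from a standard tail $0$-$1$ argument (changing $V(x)$ at any finite set of sites multiplies $s^\omega$ by a positive bounded factor, so $\{s^\omega=0\}$ is a tail event), combined with $\bbE s^\omega=1$.

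\textbf{Quenched CLT.} For any bounded continuous $f\colon\bbR^{d+1}\to\bbC$, set $\tq{n}[f]\df\sum_x f\bigl((x-nv)/\sqrt{n}\bigr)\tq{x,n}$ and $\ta{n}[f]\df\bbE\tq{n}[f]$. The martingale argument just described adapts verbatim to $\tq{n}[f]/\ta{n}[f]$: boundedness of $f$ preserves the $\bbL_2$-bound, and because $f$ enters only through the last, asymptotically negligible, renewal, the limit is the \emph{same} random variable $s^\omega$. The annealed CLT of \cite{IoffeVelenik-Annealed} gives $\ta{n}[f]/\ta{n}\to\int f\,d\calN(0,\Sigma)$. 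Hence
\[
\frac{\tq{n}[f]}{\tq{n}} = \frac{\tq{n}[f]}{\ta{n}[f]}\cdot\frac{\ta{n}[f]}{\ta{n}}\cdot\frac{\ta{n}}{\tq{n}} \longrightarrow s^\omega \cdot \int f\,d\calN(0,\Sigma) \cdot \frac{1}{s^\omega} = \int f\,d\calN(0,\Sigma) \quad \bbP\text{-a.s.,}
\]
and specialising to $f(y)=e^{i\alpha\cdot y}$ yields \eqref{eq:claimc}.

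\textbf{Main obstacle.} The principal difficulty, absent from the crossing-walk ensemble of \cite{IV-Crossing}, is that the canonical constraint $|\gamma|=n$ is not a stopping time for the renewal filtration, so $\tq{n}/\ta{n}$ is only an \emph{approximate} martingale; quantifying the incomplete-last-renewal error via \eqref{eq:exp-tails} and verifying that the $f$-twisted partition function shares the same limit $s^\omega$ -- rather than some $f$-dependent random variable -- are the delicate technical steps driving the cancellation in Step 2.
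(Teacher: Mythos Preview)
Your proposal hinges on a martingale structure that does not exist for this model. You write that, conditionally on the skeleton $(x_i,n_i)$, the factors $\fq{x_i,n_i}(\theta_{x_{<i}}\omega)$ are independent; that is true for a \emph{fixed} skeleton, but $\tq{n}$ is a sum over all skeletons, and there is no filtration $\calG_K$ of ``renewal slabs'' that is simultaneously adapted to all of them. The cone-points of different paths sit at different spatial locations, so ``the first $K$ renewals'' is path-dependent and cannot index a filtration of the environment. Your Doob martingale $M_K=\bbE[\tq{n}/\ta{n}\given\calG_K]$ is tautologically a martingale in $K$ for fixed $n$, but it does not ``factor across renewals into independent unit-mean weights'': a conditional expectation of a sum of products is not a product, and no per-renewal variance bound drops out. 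This is precisely the obstruction the paper flags---stretched polymers, unlike directed ones, lack a natural martingale structure---and it is why the paper replaces martingale convergence by McLeish's maximal inequality for \emph{mixingales}, with the deterministic hyperplane filtration $\calA_m$ of~\eqref{eq:sigma-algebras}.

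The CLT half has a second gap. Your claim that $\tq{n}[f]/\ta{n}[f]\to s^\omega$ (the \emph{same} limit) because ``$f$ enters only through the last renewal'' is false: for $f(y)={\rm e}^{i\alpha\cdot y}$, the twist contributes a factor ${\rm e}^{i\alpha_n\cdot(x_j-n_jv)}$ at \emph{every} renewal, and these $O(n)$ factors, each of size $1+O(n^{-1/2})$, do not obviously wash out. The paper's actual argument is a Sinai-type telescoping expansion~\eqref{eq:Sinai0}--\eqref{eq:S2} that isolates three correction terms $\eta_{n,i}^\omega$, each shown to vanish via the $\bbL_2$-bound~\eqref{eq:WDBound}, McLeish's inequality, and a lacunary-subsequence plus fluctuation-control step (Lemmas~\ref{lem:2Term}--\ref{lem:3Term}). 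The ratio trick $\tq{n}[f]/\tq{n}=(\tq{n}[f]/\ta{n}[f])\cdot(\ta{n}[f]/\ta{n})\cdot(\ta{n}/\tq{n})$ would require you to prove exactly those cancellations, which is the substance of the paper's Sections~2--3.
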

For the rest of the paper, we shall focus on the proof of Theorem~\ref{thm:B}.

\section{Proof of Theorem~\ref{thm:B}}
To facilitate the exposition, we shall  consider the case of on-axis external 
force $h = h\sfe_1$.  The proof, however,
readily applies for any non-zero $h\in\bbR^{d+1}$.
{By lattice symmetries, the mean displacement $v =
\nabla\lambda (h)$ lies along the direction $\sfe_1$;  
 $v = v\sfe_1$. As it was already mentioned in the beginning of Subsection~\ref{sub:result}, 
$v\neq 0$ whenever $\beta$ is small enough. We proceed assuming that both the drift and the
speed are positive $h, v >0$.
}
\subsection{Three Main Inputs}
\label{sub:Three}
The reduction to basic ensembles constitutes the central step of the Ornstein-Zernike theory. We rely on three facts:
The first is the refined description of the annealed phase in the ballistic regime (which, in our regime, will always
correspond to first fixing $h\neq 0$ and then choosing $\beta>0$ small enough). Below, we shall summarize the required
results from~\cite{IoffeVelenik-Annealed, IV-Erwin}.  The second is an $\bbL_2$-type estimate on overlaps which holds
for all $\beta$ sufficiently small, and which could be understood as quantifying the notion of very weak disorder we
employ here. The third  is a maximal inequality for the so-called {\em mixingales}, due to McLeish. Unlike directed
polymers, stretched polymers do not possess natural martingale structures, and McLeish's result happens to provide a
convenient alternative framework.
\paragraph{Ornstein-Zernike theory of annealed models.}
Annealed asymptotics of $\ta{n}$  in the ballistic regime are not related to the strength of disorder and hold for all
values of $\beta\geq 0$ and appropriately large drifts $h$ . In particular, for each $h\neq 0$ fixed, the annealed model
is ballistic for all sufficiently small $\beta$. We refer 
to~\cite[{Sections~4.1 and~4.2}]{IoffeVelenik-Annealed} and 
to~\cite[{Section~4.2}]{IV-Erwin} for the proof of the
following:  Fix $h\neq 0$; then, for all $\beta >0$ small enough, $\lambda (h)
>0${, $\nabla\lambda (h)\neq 0$ and ${\rm Hess}[\lambda] (h)$ is positive
definite}. Furthermore, there exist a small
complex neighbourhood $\calU\subset\bbC^{d+1}$ of the origin, an analytic function $\mu$ (with $\mu (0) = 0$) on $\calU$
and a non-vanishing analytic function $\kappa\neq 0$ on $\calU$ such that:
\be
\label{eq:tanAsympt}
\lim_{n\to\infty} {\rm e}^{-n\mu (z)}  \ta{n}(z) \df
\lim_{n\to\infty} {\rm e}^{-n\mu (z)} \sum_x\ta{x ,n}{\rm e}^{z\cdot x}
= \frac{1}{\kappa (z)} ,
\ee
uniformly exponentially fast on $\calU$.
Note~\cite{IV-Erwin} ({Section~4.2})  that $\lambda (h +z) = \lambda (h) +\mu (z)$
for real $z${, and thus $v = \nabla\lambda (h) = \nabla\mu(0)$ and $\Sigma
= {\rm Hess }[\lambda] (h) = {\rm Hess}[\mu] (0)$}.

The annealed model satisfies a local LD upper bound: There exists $ c = c
(\beta,h)>0$ such that, for all $x\in\calY^h$,
\be
\label{eq:Aupper}
\ta{x, n} 
%= \ta{n}\lb X (\gamma ) = x\rb 
\leq 
\frac{1}{c \sqrt{n^{d+1}}}
\exp\Bigl\{ -c\frac{|x- nv|^2}{n}\Bigr\} .
\ee
{
In view of Remark~\ref{rem:tn} the above bound is trivial whenever $\abs{x} >n$. 
}
\newline
Finally, it is a straightforward consequence of~\eqref{eq:tanAsympt} that the
following annealed CLT holds:
\be
\label{eq:AnCLT}
\aS{n} \bigl(\frac{\alpha}{\sqrt{n}}\bigr) \df
\sum_{x} \ta{x ,n} \exp\bigl\{ i  \frac{\alpha}{\sqrt{n}} \cdot (x - nv) 
\bigr\}  =
\frac{1}{\kappa (0) }
\exp\bigl\{ -\tfrac12\Sigma\,\alpha\cdot\alpha\bigr\} \bigl(1 + O(n^{-1/2}) 
\bigr) ,
\ee
with the second asymptotic equality holding uniformly in $\alpha $ on compact
subsets of $\bbR^{d+1}$.

\paragraph{An $\bbL_2$-estimate.}
Fix an external force $h\neq 0$. 
We continue to employ notation $v = v ( h,\beta )$.
For a subset $A\subseteq\bbZ^{d+1}$, let $\calA$ be the $\sigma$-algebra
generated by
$\lbr V (x)\rbr_{x\in A}$. We shall call such $\sigma$-algebras cylindrical.
\begin{lemma}
\label{lem:WDBound}
{For any dimension $d\geq 3$ there exist a positive non-decreasing function 
$\zeta_d$ on $(0,\infty )$ and a number $\rho < 1/12$ such that the following 
holds: If $\phi_\beta (1) <\zeta_d (\abs{h})$, then}
%In the regime of very weak disorder 
there exist  constants $c_1 ,c_2 <\infty$ such that the random weights~\eqref{eq:weights}
satisfy: 
\be
\label{eq:WDBound}
\begin{split}
&\abs{
\bbE 
 \bigl[ \tq{x , \ell}\tq{\xpr ,\ell } \bbE(\fxq{y ,m } -\fa{y, m}\,|\,
\calA) \bbE (\fxpq{\ypr ,\mpr} -\fa{\ypr ,\mpr} \,|\,\calA)
 \bigr]
} \\
&\quad
\leq \frac{c_1{\rm e}^{- c_2 (m+\mpr )}}{ \ell^{d+1 -\rho}} \exp\Bigl\{
-c_2 \Bigl( \abs{x -\xpr}
+ \frac{\abs{x -\ell v}^2}{\ell} +\frac{\abs{\xpr -\ell 
v}^2}{\ell}\Bigr)\Bigr\} , 
\end{split}
\ee
{for all} $x , \xpr, m, \mpr, y,
\ypr , \ell $ and  all cylindrical  $\sigma$-algebras $\calA$ such that both
$\tq{x,\ell}$ and $\tq{\xpr,\ell}$ are
$\calA$-measurable.
\end{lemma}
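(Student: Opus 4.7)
The bound is a quantitative $\bbL_2$-overlap estimate for cone-confined and irreducible partition functions in the weak-disorder regime. My approach combines a chaos expansion in centered disorder variables with the Ornstein--Zernike inputs recalled in Subsection~\ref{sub:Three}. The starting observation is that $\tq{x,\ell}\tq{\xpr,\ell}$ is $\calA$-measurable and factors out of the conditional expectations; what remains is to control the correlation between the centered forward irreducible pieces (effectively restricted to $A$) and the backward cone-confined pieces, mediated by shared disorder sites. For each path $\gamma$ and each visited vertex $z$, write
\begin{equation*}
  e^{-\beta V(z)\ell_\gamma(z)} = e^{-\phi_\beta(\ell_\gamma(z))}\bigl(1 + \tilde\Delta(z,\ell_\gamma(z))\bigr),
\end{equation*}
where the $\tilde\Delta$'s are mean zero, independent across $z$, and have explicit second moments $e^{2\phi_\beta(k)-\phi_\beta(2k)}-1$ that are small when $\phi_\beta(1)$ is small. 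Expanding each of the four factors $\tq{x,\ell}, \tq{\xpr,\ell}, \fxq{y,m}-\fa{y,m}, \fxpq{\ypr,\mpr}-\fa{\ypr,\mpr}$ produces polynomial expansions indexed by pairs (path, subset of visited vertices).

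\textbf{Step 1 (Wick reduction).} Taking the full $\bbE$, centering and independence force every $\tilde\Delta$-site to be shared by an even number $\geq 2$ of the four factors. The chaos subsets arising in $\bbE(\cdot|\calA)$ must further sit inside $A-x$ and $A-\xpr$ respectively, which by the $\calA$-measurability assumption is contained in the union of the two cone-confined diamonds based at $0,x$ and $0,\xpr$. This geometric constraint localises the interaction between the shifted forward pieces and the backward skeletons near the endpoints $x, \xpr$.

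\textbf{Step 2 (assembling the bound).} Bound the resulting Wick sum using three inputs: (a) the annealed heat-kernel upper bound~\eqref{eq:Aupper} for the cone-confined pieces, producing the Gaussian factors $\exp(-c\abs{x-\ell v}^2/\ell)\exp(-c\abs{\xpr-\ell v}^2/\ell)$ together with the base $\ell^{-(d+1)}$ decay; (b) the irreducible tail~\eqref{eq:exp-tails}, producing the $e^{-c_2(m+\mpr)}$ factor; and (c) summability of an effective Ornstein--Zernike Green function governing the sums over Wick-contraction vertices. The threshold $\zeta_d(\abs{h})$ is defined precisely so that the operator underlying (c) has $\bbL_2$-norm strictly less than one when $\phi_\beta(1)<\zeta_d(\abs{h})$, making the geometric series of Wick contractions convergent; the restriction $d\geq 3$ is used to make the underlying skeletal Green function square-summable. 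The additional exponential $e^{-c_2\abs{x-\xpr}}$ emerges from the off-diagonal decay of that Green function between the endpoints $x$ and $\xpr$, which is exponential because the underlying skeletal walk is biased along $v$. The small loss $\ell^\rho$, with $\rho<1/12$, absorbs the combinatorial entropy of choosing the Wick-contraction vertices and can be kept arbitrarily small by further reducing $\phi_\beta(1)$.

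\textbf{Main obstacle.} The most delicate point is to keep the exponent of $\ell$ strictly below $d+1$ uniformly in all of $x, \xpr, y, \ypr, m, \mpr$ \emph{and} in the cylindrical $\calA$. The worst case is when $A$ is large and both $x, \xpr$ are close to $\ell v$: many contractions can then localise near the endpoints and threaten the $\ell^{-(d+1)}$ decay. Control requires that the skeletal two-point function used in (c) be dominated in $\bbL_2$ uniformly in the number of contracted vertices, which is exactly what the threshold $\zeta_d(\abs{h})$ is engineered to guarantee, and is where the whole proof actually rests.
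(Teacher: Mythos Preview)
Your approach via chaos expansion is genuinely different from the paper's, and as written it has a real gap. The paper never expands in the centered variables $\tilde\Delta$. Instead it exploits two elementary consequences of the \emph{attractivity} $\phi_\beta(\ell+m)\le\phi_\beta(\ell)+\phi_\beta(m)$ of the annealed potential. First, attractivity gives the monotonicity
\[
\bbE\bigl[\tq{x,\ell}\tq{\xpr,\ell}\,\bbE(\fxq{y,m}\,|\,\calA)\,\bbE(\fxpq{\ypr,\mpr}\,|\,\calA)\bigr]
\le \bbE\bigl[\tq{x,\ell}\tq{\xpr,\ell}\,\fxq{y,m}\,\fxpq{\ypr,\mpr}\bigr],
\]
so the conditioning can simply be dropped. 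Second, a purely geometric independence argument shows the left-hand side of~\eqref{eq:WDBound} vanishes unless the forward diamonds $D(x,y)$ and $D(\xpr,\ypr)$ intersect, which forces $|x-\xpr|\le m+\mpr$; this is the sole source of the factor $e^{-c_2|x-\xpr|}$, obtained by trading half of the $e^{-\nu(m+\mpr)}$ from~\eqref{eq:exp-tails}. Attractivity is used once more to peel off the irreducible pieces at the cost of $e^{\phi_\beta(1)(m+\mpr)}$, reducing everything to $\bbE[\tq{x,\ell}\tq{\xpr,\ell}]$. That last quantity is handled by H\"older against an exponential-moment bound $\RWEC e^{\frac{d+1}{\rho}\Delta_\beta}\leqs 1$ imported from~\cite{IV-Crossing}, together with~\eqref{eq:Aupper}; this is where the loss $\ell^\rho$ and the definition of $\zeta_d$ actually come from.

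The specific gap in your sketch is Step~1. You claim that the $\calA$-measurability of $\tq{x,\ell},\tq{\xpr,\ell}$ forces the chaos subsets of the conditioned irreducible pieces to lie inside the union of the diamonds $D(0,x)\cup D(0,\xpr)$. This is backwards: $\calA$-measurability means $A$ \emph{contains} those diamonds, not that $A$ is contained in them; $A$ may be all of $\Zdo$. So conditioning on $\calA$ does not localise the forward chaos near $x,\xpr$, and your mechanism for the $e^{-c_2|x-\xpr|}$ factor (``off-diagonal decay of the skeletal Green function'') is not supported by the argument you wrote. The correct localisation is that the chaos of $\fxq{y,m}$ already lives in $D(x,y)$ regardless of $\calA$, and the vanishing/nonvanishing dichotomy is governed by whether $D(x,y)\cap D(\xpr,\ypr)=\emptyset$; once you see this, the chaos expansion is unnecessary and the paper's three-line attractivity argument does the job.
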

\begin{remark}
{ 
The above bound is non-trivial only if both 
$\abs{x}, \abs{\xpr} \leq \ell$ (Remark~\ref{rem:tn}).} Also, 
there is nothing sacred about the condition $\rho <1/12$. We just need $\rho$ to
be sufficiently small. In fact, \eqref{eq:WDBound} holds with  $\rho=0$,
although a proof of such statement would be a bit  more involved. 
\end{remark}

In spite of its technical appearance, \eqref{eq:WDBound} has a transparent
intuitive meaning: For $\rho=0$, the expressions on the
right-hand side are just local limit bounds for a couple of independent annealed
polymers  with exponential penalty for
disagreement at  their end-points.  The irreducible terms have exponential
decay. In the very weak disorder regime, the
interaction between polymers does not destroy these asymptotics.
The proof of Lemma~\ref{lem:WDBound} is relegated to the concluding
Section~\ref{sec:WDBound}.
\smallskip

\paragraph{McLeish's Maximal Inequality.}
Let $Z_1 , Z_2, \ldots $ be a sequence of zero-mean, square-integrable random
variables. Let also
$\lbr\calA_k\rbr_{-\infty}^{\infty}$ be a filtration of $\sigma$-algebras.
Suppose that we have chosen $\epsilon >0$ and
numbers $\psi_1, \psi_2 , \ldots $ in such a way that
\be
\label{eq:McLeish}
\bbE\bigl[\bbE(Z_\ell \,|\, \calA_{\ell-k})^2\bigr] \leq
\frac{\psi_\ell^2}{(1+k)^{1+\epsilon}}
\quad\text{and}\quad
\bbE\bigl[\bigl( Z_\ell - \bbE(Z_\ell \,|\, \calA_{\ell+k})\bigr)^2\bigr] \leq
\frac{\psi_\ell^2}{(1+k)^{1+\epsilon}}
\ee
for all $\ell = 1,2, \ldots $ and $k\geq 0$. Then~\cite{McLeish} there exists $K
= K (\epsilon) < \infty$ such that, for
all $n_1 \leq  n_2$,
\be
\label{eq:Maximal}
\bbE\Bigl[\max_{n_1\leq r\leq n_2}\Bigl( \sum_{n_1}^r Z_\ell\Bigr)^2\,\Bigr] 
\leq K \sum_{n_1}^{n_2}
\psi_\ell^2 .
\ee
{
\begin{remark}
 \label{re:conv} 
In particular, if $\sum_\ell \psi_\ell^2 <\infty$, then $\sum_\ell Z_\ell$
converges $\bbP$-a.s.\ and in $\bbL_2$.
\end{remark}
}
\smallskip

In the sequel, we shall always work with the following filtration
$\lbr\calA_m\rbr$. Recall that we are discussing
on-axis {positive} drifts $h = h\sfe_1$ {which, for small $\beta$, give rise to on-axis 
limiting spatial extension $v = v\sfe_1$ with $v >0$.} 
%~\cite{IoffeVelenik-Annealed, IV-Erwin}. 
At this stage, define
the hyperplanes $\cHm{m}$ and the corresponding $\sigma$-algebras $\calA_m$ as
\be
\label{eq:sigma-algebras}
\cHm{m} = \setof{x\in\bbZ^{d+1}}{x\cdot\sfe_1 \leq m|v|}\quad\text{and}\quad
\calA_m =\sigma\setof{V (x)}{x\in\cHm{m}} .
\ee

\paragraph{Notation for asymptotic relations.}
The following notation is convenient, and we shall use it throughout the text: 
Given a (countable) set of indices
${\calI}$ and two positive sequences $\lbr a_\alpha , b_\alpha
\rbr_{\alpha\in\calI}$, we say that $a_\alpha \leqs
b_\alpha$ if there exists a constant $c>0$ such that $a_\alpha \leq  c b_\alpha$
for all $\alpha\in\calI$ . We shall use
$a_\alpha \cong b_\alpha$ if both $a_\alpha\leqs b_\alpha$ and $a_\alpha\geqs
b_\alpha$ hold. For instance, for any
$\epsilon >0 $ fixed,
\begin{equation}
\label{eq:m-lbound}
\frac{{\rm e}^{-  c_3 k^2 /\ell}}{\ell^{{(1+\epsilon )/2}} }\leqs
\frac{1}{(1+k)^{1+\epsilon}} ,
\end{equation}
where the index set $\calI$ is the set of pairs of integers $(k , \ell )$ with
$k\geq 0$ and $\ell >0$.

\paragraph{Structure of upper bounds.}

Our upper bounds are based on~\eqref{eq:m-lbound}, \eqref{eq:WDBound} (applied
with $\rho = \epsilon/2$) and on~\eqref{eq:Maximal}. Recall that $\rho <1/12$,
and hence $\epsilon <1/6$. 
\newline
In the sequel, we shall repeatedly derive variance bounds on quantities of the
type $\sum_{\ell\leq n}Z^{(n)}_\ell$. The
most general form of $Z^{(n)}_\ell$ we shall consider is
\be
\label{eq:Znl}
Z^{(n)}_\ell = \sum_{x}\tq{x,\ell}\sum_{y,m} a^{(n)}_{x,\ell} (y,m)\,
\bigl(\fxq{y,m} - \fa{y,m}\bigr) ,
\ee
where $\bigl\{ a^{(n)}_{x,\ell} (y ,m )\bigr\}$ are 
% uniformly bounded (in all
%the parameters involved) 
{arrays of real or
complex numbers}. Assume that there exists another family of 
%uniformly bounded
{(non-negative)} 
arrays $\bigl\{ \hat
a^{(n)}_{x,\ell}\bigr\}$ {and a number $\nu >0$}  such that
\be
\label{eq:abar}
{\rm e}^{-c_2 m} \sum_{|y|\leq m}\big|a^{(n)}_{x,\ell} (y,m)\bigr| \leqs
{{\rm e}^{-\nu m}} \hat a^{(n)}_{x,\ell} ,
\ee
where the constant $c_2$ is inherited from~\eqref{eq:WDBound}.
{
\begin{lemma}
\label{lem:LemUB-1}
Set $\epsilon = \frac{\rho}{2}$, where $\rho $ is the power which 
shows up  in \eqref{eq:WDBound}.    
Under assumption \eqref{eq:abar} 
\be
\label{eq:GUpper1}
\bbE\bigl[\bigl( \bbE(Z^{(n)}_\ell \,|\, \calA_{\ell-k}) \bigr)^2\bigr]
\leqs  \frac{1}{\ell^{d+1 - \epsilon/2 }}
\sum_{x\in\calH^-_{\ell -k}}
{\rm e}^{-c_2\frac{\abs {x-\ell v}^2}{\ell}}\bigl( \hat 
a^{(n)}_{x,\ell}\bigr)^2 , 
\ee
and 
\be 
\label{eq:GUpper1-plus}
\bbE\bigl[\bigl(
Z^{(n)}_\ell - \bbE(Z^{(n)}_\ell \,|\, \calA_{\ell+k}) \bigr)^2 \bigr]
\leqs 
\frac{1}{\ell^{d+1 - \epsilon/2 }}
\sum_{x}
{\rm e}^{-c_2\frac{\abs {x-\ell v}^2}{\ell}- \nu {\rm d}_{\ell +k } (x)}
\bigl(\hat a^{(n)}_{x,\ell}\bigr)^2 . 
\ee
Above we introduced a 
provisional notation ${\rm d}_{r} (x) \df \lb r \abs{v} - \sfe_1\cdot x\rb\vee 0$ 
for the distance from $x$ to $\calH_r^+=\bbZ^{d+1}\setminus\calH_r^-$.
\end{lemma}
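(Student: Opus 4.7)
The plan is to reduce both inequalities to the pair estimate of Lemma~\ref{lem:WDBound} by expanding the $\bbL_2$-norm into a double sum indexed by $(x,\xpr)$ and then performing the remaining summations using the domination \eqref{eq:abar} together with the Gaussian and exponential decays already built into \eqref{eq:WDBound}. The key compatibility with the filtration $\{\calA_m\}$ is the following: any cone-confined polymer from $0$ to $x$ is forced by $\gamma\subset\calY^h\cap(x-\calY^h)$ to stay in the slab $\{z:\,0\leq z\cdot\sfe_1\leq x\cdot\sfe_1\}$, so $\tq{x,\ell}$ depends only on $V(z)$ for $z$ in this slab. Consequently $\tq{x,\ell}$ is $\calA_{\ell-k}$-measurable whenever $x\in\cHm{\ell-k}$, and after shifting by $x$ the product $\tq{x,\ell}\fxq{y,m}$ is $\calA_{\ell+k}$-measurable whenever both $x$ and $x+y$ lie in $\cHm{\ell+k}$.

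For \eqref{eq:GUpper1} I would split $Z^{(n)}_\ell$ at the hyperplane $\cHm{\ell-k}$. On the bulk part $\{x\in\cHm{\ell-k}\}$ the factor $\tq{x,\ell}$ pulls out of the conditional expectation, and expanding the square produces, term by term, exactly the quantity controlled by \eqref{eq:WDBound} with $\calA=\calA_{\ell-k}$. After applying that bound, I use \eqref{eq:abar} to absorb the $y,m,\ypr,\mpr$-summations against $\mathrm{e}^{-c_2(m+\mpr)}$, and then combine the AM--GM bound $\hat{a}^{(n)}_{x,\ell}\hat{a}^{(n)}_{\xpr,\ell}\leq\tfrac12((\hat{a}^{(n)}_{x,\ell})^2+(\hat{a}^{(n)}_{\xpr,\ell})^2)$ with the $\mathrm{e}^{-c_2|x-\xpr|}$ decay to carry out the $\xpr$-summation and obtain the claimed right-hand side. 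The complementary piece $\{x\in\cHp{\ell-k}\}$ is handled by invoking \eqref{eq:WDBound} with the trivial $\sigma$-algebra $\calA=\{\emptyset,\Omega\}$; the loss of measurability there is offset by the Gaussian factor $\mathrm{e}^{-c_2k^2|v|^2/\ell}$ that becomes available because $|x-\ell v|\geq k|v|$ throughout that set.

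For \eqref{eq:GUpper1-plus} I would use the dual observation that the summand $\tq{x,\ell}(\fxq{y,m}-\fa{y,m})$ becomes $\calA_{\ell+k}$-measurable, and therefore contributes zero to $Z^{(n)}_\ell-\bbE(Z^{(n)}_\ell\,|\,\calA_{\ell+k})$, whenever both $x$ and $x+y$ lie in $\cHm{\ell+k}$. The surviving contributions split into (i)~$x\in\cHp{\ell+k}$, handled exactly as the boundary piece above, and (ii)~$x\in\cHm{\ell+k}$ together with a cone-confined shifted path reaching past $\cHp{\ell+k}$, which since $|y|\leq m$ and $y\in\calY^h$ forces $m\geq{\rm d}_{\ell+k}(x)$. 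The irreducible tail \eqref{eq:exp-tails} then delivers the required factor $\mathrm{e}^{-\nu{\rm d}_{\ell+k}(x)}$, after which the remainder of the calculation is a verbatim repetition of the argument for \eqref{eq:GUpper1}.

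The main technical obstacle is the boundary bookkeeping: when either $x$ or $x+y$ crosses the conditioning hyperplane the conditional-measurability argument breaks down, and the loss has to be recovered by carefully combining \eqref{eq:WDBound} with the Gaussian bound \eqref{eq:Aupper} or with the irreducible tail \eqref{eq:exp-tails}. Once these boundary contributions are absorbed, the slight weakening of the exponent from $d+1-\rho$ in \eqref{eq:WDBound} to $d+1-\epsilon/2$ in \eqref{eq:GUpper1} and \eqref{eq:GUpper1-plus} leaves ample slack to close the summations.
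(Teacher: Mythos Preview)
Your overall strategy---expand the square, apply \eqref{eq:WDBound}, absorb the $(y,m,\ypr,\mpr)$ sums via \eqref{eq:abar}, then use $|ab|\leq\tfrac12(a^2+b^2)$ together with the $\mathrm{e}^{-c_2|x-\xpr|}$ decay to collapse the $\xpr$-sum---is exactly what the paper does, and your treatment of \eqref{eq:GUpper1-plus} via the two-case split matches the paper's decomposition \eqref{eq:Zntwo}.

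There is, however, a genuine slip in your handling of the boundary piece $\{x\in\cHp{\ell-k}\}$ for \eqref{eq:GUpper1}. You claim that $|x-\ell v|\geq k|v|$ throughout $\cHp{\ell-k}$, but this inequality goes the wrong way: for $x\in\cHp{\ell-k}$ one has $x\cdot\sfe_1>(\ell-k)|v|$, while $\ell v\cdot\sfe_1=\ell|v|$, so $x$ can sit arbitrarily close to $\ell v$ (indeed $\ell v$ itself lies in $\cHp{\ell-k}$). The Gaussian bonus you invoke is therefore not available, and ``\eqref{eq:WDBound} with the trivial $\sigma$-algebra'' gives nothing either, since then $\bbE(\fxq{y,m}-\fa{y,m}\,|\,\{\emptyset,\Omega\})=0$ identically.

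The correct observation, and the one the paper uses, is that this boundary piece vanishes outright. If $x\in\cHp{\ell-k}$, then the diamond $D(x,x+y)\subset x+\calY^h$ lies entirely in $\cHp{\ell-k}$, so $\fxq{y,m}$ is independent of $\calA_{\ell-k}$; it is also independent of $\tq{x,\ell}$ (the two cone-confined supports meet only at $x$, and the weight \eqref{eq:qweight} does not use $V$ at the starting vertex). Hence
\[
\bbE\bigl(\tq{x,\ell}(\fxq{y,m}-\fa{y,m})\,\big|\,\calA_{\ell-k}\bigr)=0\qquad\text{whenever }x\in\cHp{\ell-k},
\]
and only the ``bulk'' sum over $x\in\cHm{\ell-k}$ survives in \eqref{eq:Znone}. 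With this replacement your argument goes through and coincides with the paper's. (Note, incidentally, that the lower bound $|x-\ell v|\geq k|v|$ \emph{does} hold on $\cHm{\ell-k}$; it is what later drives the passage from \eqref{eq:GUpper1} to \eqref{eq:GUpper2}, and it is also why your reasoning for case~(i) of \eqref{eq:GUpper1-plus}, where $x\in\cHp{\ell+k}$, is correct.)
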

}
{
\begin{proof}
Since $\bbE \lb  \tq{x,\ell} 
\lb \fxq{y,m} - \fa{y,m}\rb  |\, \calA_{\ell-k}\rb = 0$ whever $ x\not\in \calH_{\ell-k}^-$, 
\be
\label{eq:Znone}
 \bbE(Z^{(n)}_\ell \,|\, \calA_{\ell-k} )
= \sum_{x\in \calH_{\ell-k}^-} \tq{x,\ell}\sum_{y,m} a^{(n)}_{x,\ell} (y,m)\,
\bbE(\fxq{y,m} - \fa{y,m} \,|\, \calA_{\ell-k} ) .
\ee
Taking the expectation of the square of the latter expression
and, for each $x, \xpr$,  factorizing replicas using $\abs{ab}\leq \frac{a^2 +b^2}{2}$, one
derives the first inequality~\eqref{eq:GUpper1} directly from 
\eqref{eq:WDBound} and 
\eqref{eq:abar}. \newline
Next, 
\begin{multline}
\label{eq:Zntwo}
Z^{(n)}_\ell - \bbE(Z^{(n)}_\ell \,|\, \calA_{\ell+ k})
=
\sum_{x\in\calH_{\ell+k}^+} \tq{x,\ell}
\sum_{y,m} a^{(n)}_{x,\ell} (y,m)\,
\bigl( \fxq{y,m} - \fa{y,m}\bigr)
\\
+
\sum_{x\in\calH_{\ell+k}^-}
\tq{x,\ell}
\sum_{z\in\calH_{\ell+k}^+}\sum_m
a^{(n)}_{x,\ell} (z-x,m)\,
\bigl(
\fxq{z-x,m} - \bbE(\fxq{z-x,m} \,|\, \calA_{\ell+k}) \bigr) .
\end{multline}
For any $x\in \calH_{\ell +k}^+$, ${\rm d}_{\ell +k}  (x ) = 0$, 
and   the first term in~\eqref{eq:Zntwo} has exactly the same structure as the
right-hand side of~\eqref{eq:Znone}.  
On the
other hand, if $x \in\calH_{\ell +k}^-$ 
%for some $r<k$ 
and $z\in\calH_{\ell
+k}^+$, then, in view of Remark~\ref{rem:tn},  $\fxq{z-x,m}$ can be different
from zero only if
$m\geq {\rm d}_{\ell +k } (x)$ and $\abs{z-x}\leq m$. 
 Therefore, \eqref{eq:GUpper1-plus} is also a direct consequence of 
\eqref{eq:WDBound} and \eqref{eq:abar}. 
\end{proof}
}
{
The following is a useful corollary:
\begin{lemma}
\label{lem:LemUB-2}
If $\hat a^{(n)}_{x,\ell}\leqs \hat a^{(n)}_{\ell}$, then the  bounds
\eqref{eq:GUpper1} and 
\eqref{eq:GUpper1-plus}
reduce to
\be
\label{eq:GUpper2}
\bbE\bigl[\bigl( \bbE(Z^{(n)}_\ell \,|\, \calA_{\ell-k}) 
\bigr)^2\bigr],\; 
\bbE\bigl[\bigl(
Z^{(n)}_\ell - \bbE(Z^{(n)}_\ell \,|\, \calA_{\ell+k})\bigr)^2\bigr]
\leqs
\bigl(\hat a^{(n)}_{\ell}\bigr)^2\frac{1}{\ell^{d/2 -\epsilon}
(1+k)^{1+\epsilon}} .
\ee
\end{lemma}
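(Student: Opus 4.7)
\medskip
\noindent\textbf{Proof plan.} The plan is to substitute the uniform bound $\hat a^{(n)}_{x,\ell} \leqs \hat a^{(n)}_\ell$ into \eqref{eq:GUpper1}--\eqref{eq:GUpper1-plus}, factor out the $x$-independent quantity $(\hat a^{(n)}_\ell)^2/\ell^{d+1-\epsilon/2}$, and estimate the remaining lattice sums. The exponent balance works out as $(d+1)/2 - \epsilon/2 - (d/2-\epsilon) = (1+\epsilon)/2$, so the elementary comparison \eqref{eq:m-lbound} will convert a Gaussian factor $e^{-c k^2/\ell}$ into the polynomial decay $(1+k)^{-(1+\epsilon)}$ postulated by \eqref{eq:GUpper2}.

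For the first bound, what remains is $\sum_{x\in\calH^-_{\ell-k}} e^{-c_2|x-\ell v|^2/\ell}$. Since $v = v\sfe_1$ with $v>0$, the Gaussian centre $\ell v$ lies on the $\calH^+_{\ell-k}$ side of the boundary, at signed distance $+kv$ from $\calH^-_{\ell-k}$. Separating the $\sfe_1$-direction from the remaining $d$ directions and invoking the standard one-dimensional Gaussian tail estimate gives $\leqs \ell^{(d+1)/2}\, e^{-c k^2/\ell}$; \eqref{eq:m-lbound} then yields the stated bound.

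The second bound is slightly more delicate, because now the Gaussian centre $\ell v$ lies on the \emph{same} side as $\calH^-_{\ell+k}$, and the penalty $d_{\ell+k}$ is \emph{active} at the centre. Setting $s \df x_1 - \ell v$, I would split the $\sfe_1$-axis at $s = kv$: on $\{s \geq kv\}$ one has $d_{\ell+k}(x)=0$ and the pure Gaussian tail contributes $\sqrt{\ell}\,e^{-c k^2/\ell}$; on $\{s<kv\}$ the combined exponent $c_2 s^2/\ell + \nu(kv-s)$ is, by completing the square in $s$, bounded below by $c\min(k^2/\ell,\, k)$. In either regime the $\sfe_1$-sum is $\leqs \sqrt{\ell}\,e^{-c k^2/\ell}$, the other $d$ directions contribute $\ell^{d/2}$, and \eqref{eq:m-lbound} finishes the argument exactly as for the first bound.

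The only mildly non-routine step is the competition between the quadratic Gaussian and the linear entropic penalty on $\{s<kv\}$ in the second estimate; but this is a standard one-variable saddle-point computation, so no real conceptual obstacle is anticipated.
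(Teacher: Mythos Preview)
Your approach is correct and close in spirit to the paper's, but you organise the spatial sum differently. The paper handles $\sum_x e^{-c_2|x-\ell v|^2/\ell}$ (and its $d_{\ell+k}$-penalised variant) via a \emph{radial} split on $|x-\ell v|$: the region $\{|x-\ell v|>k|v|/2\}$ is controlled by the full $(d{+}1)$-dimensional Gaussian tail integral $\int_{k|v|/2}^\infty r^d e^{-c_2 r^2/\ell}\,\dd r \leqs \ell^{(d+1)/2}e^{-c_3 k^2/\ell}$, and (for \eqref{eq:GUpper1-plus}) the complementary ball $\{|x-\ell v|\leq k|v|/2\}$ is controlled by observing that $d_{\ell+k}(x)\geq k|v|/2$ there, so this piece is simply $\leqs e^{-\nu'k}\leqs (1+k)^{-(1+\epsilon)}$. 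You instead separate the $\sfe_1$-coordinate from the $d$ transverse ones and do a one-variable saddle-point analysis of $c_2 s^2/\ell+\nu(kv-s)$. Both routes are standard; the paper's radial split is slightly quicker because it sidesteps the completing-the-square computation, while yours makes the competition between the Gaussian and the linear penalty more explicit.

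One small slip to fix: having correctly established the lower bound $c\min(k^2/\ell,k)$ for the exponent on $\{s<kv\}$, you then summarise the $\sfe_1$-sum as $\leqs \sqrt{\ell}\,e^{-ck^2/\ell}$. When $k>\ell$ this overstates what you have proved --- the bound is only $\sqrt{\ell}\,e^{-ck}$. The conclusion is unaffected, since $\sqrt{\ell}\,e^{-ck}\leqs (1+k)^{-(1+\epsilon)}$ directly, but you should either carry the $\min$ through to the end or treat the two regimes $k\leq\ell$ and $k>\ell$ separately when invoking~\eqref{eq:m-lbound}.
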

}
\begin{proof}
{Consider first the right-hand side of \eqref{eq:GUpper1}.}
Since $\sum_{x\in\calH^-_{\ell -k}} {\rm e}^{-c_2\frac{\abs {x-\ell
v}^2}{\ell}}\leqs
\ell^{\frac{d+1}{2}}$, the non-trivial part  is to
check~\eqref{eq:GUpper2} for large values of $k$. In the
latter case, we may assume that $\abs{x-v\ell} >\frac{k\abs{v}}{2}$ for all
$x\in\cHm{\ell - k}$. 
Consequently, 
the sum on the right-hand side of~\eqref{eq:GUpper1} is bounded above by
\be
\label{eq:VarXlm}
\begin{split}
 &
\sum_{x\in \cHm{ \ell - k}}
{\rm e}^{-c_2 \abs{x-v\ell}^2/\ell}
\leqs 
\int_{\abs{y} >\frac{k \abs{v}}{2}} {\rm e}^{- c_2\abs{y}^2/\ell}\dd y\\
&{\eqvs} 
\int_{\frac{k\abs{v}}{2}}^\infty r^d {\rm e}^{-c_2 r^2 /\ell}\dd r \leqs
\ell^{(d+1)/2}{{\rm e}^{-c_3 k^2 /\ell}}
\leqs \frac{\ell^{d/2 +1  + \epsilon/2} }{(1+k)^{1+\epsilon}} ,
\end{split}
\ee
the last inequality being an application of~\eqref{eq:m-lbound}. 
\eqref{eq:GUpper2} follows.\newline
{ 
Turning to the right-hand side of \eqref{eq:GUpper1-plus}, we see that 
it  remains to derive an upper bound on 
\be 
\label{eq:extra-term}
 \sum_{x\in\calH_{\ell +k}^- }
{\rm e}^{-c_2\frac{\abs {x-\ell v}^2}{\ell}- \nu {\rm d}_{\ell +k } (x)} 
\leqs \sum_{\abs{y} >\frac{k \abs{v}}{2}} {\rm e}^{-c_2\frac{\abs{y}^2}{\ell}}
+ \sum_{\abs{y} \leq \frac{k \abs{v}}{2}} {\rm e}^{- \nu  {\rm d}_{k } (y)} .
\ee
%Recall (Remark~\ref{rem:tn}) that the summation is restricted to $x$-s 
%satisfying  $\abs{x}\leq \ell$. Such $x$-s may belong to hyperplanes; $x\in\partial \calH_r^-$, only 
%for $r$-s satisfying $r\leq \ell/\abs{v}$. 
The first sum above is treated as  in  \eqref{eq:VarXlm}. 
On the other hand, the second sum is bounded above as $\leqs {\rm 
e}^{-\nu^\prime k}$, uniformly in all $k$ sufficiently large. Since ${\rm 
e}^{-\nu^\prime k} \leqs (1+k)^{-1-\epsilon}$, 
the bound \eqref{eq:GUpper2} for 
$\bbE\bigl[\bigl(
Z^{(n)}_\ell - \bbE(Z^{(n)}_\ell \,|\, \calA_{\ell+k}) \bigr)^2\bigr]$ follows 
as well. 
}
\end{proof}
As an application  of~\eqref{eq:GUpper2} we derive the following convergence result:
\begin{lemma}
\label{lem:largem}
{Assume that, for some $\nu^\prime>0$, the asymptotic 
bound~\eqref{eq:GUpper2} is, uniformly in $n$ and $\ell\leq n$, 
satisfied with $\hat a_\ell^{(n)}\leqs 
{\rm e}^{-\nu^\prime (n-\ell )}$. Then
\be
\label{eq:conv-to-zero-exp} 
\lim_{n\to\infty} \sum_{\ell\leq n} Z^{(n)}_\ell = 0, 
\ee
$\bbP$-a.s.\ and in $\bbL_2$. }In particular, 
{ 
assume that the asymptotic bound 
\eqref{eq:abar} is satisfied for an array 
 $\bigl\{ b^{(n)}_{x,\ell} (y ,m )\bigr\}$  
with some $\nu >0$ and $\hat b_{x, \ell}^{(n)} \leqs 1$.
}
 Then
\be
\label{eq:largem}
\lim_{n\to\infty}
\sum_{\ell\leq n}
\sum_{x}\tq{x,\ell}\sum_{m >n-\ell}\sum_y b^{(n)}_{x,\ell} (y ,m )\lb
\fxq{y ,m } - \fa{y, m}\rb 
{ 
\df \lim_{n\to\infty} \sum_{\ell\leq n} Z^{(n)}_\ell
}
= 0 ,
\ee
$\bbP$-a.s.\ and in $\bbL_2$.
\begin{proof}
{
By~\eqref{eq:GUpper2},
\be 
\label{eq:conv-zero}
\bbE\bigl[\bigl( \bbE(Z^{(n)}_\ell \,|\, \calA_{\ell-k}) \bigr)^2\bigr],\;
{
\bbE\bigl[\bigl(
Z^{(n)}_\ell - \bbE(Z^{(n)}_\ell \,|\, \calA_{\ell+k})\bigr)^2\bigr]
}
\leqs \frac{{\rm e}^{-2\nu^\prime (n-\ell )}}{\ell^{d/2 -\epsilon} (1+k)^{1+\epsilon}}
\ee
{
Applying~\eqref{eq:Maximal} for each $n=1,2,\dots$ (with $\psi_\ell^2 = 
\bigl(\psi_\ell^{(n)}\bigr)^2 =
\frac{{\rm e}^{-2\nu^\prime (n-\ell )}}{\ell^{d/2 -\epsilon}}$),
we infer that
\[
\bbE\bigl[\bigl(\sum_{\ell\leq n} Z_\ell^{(n)}\bigr)^2\bigr] \leqs
\sum_{\ell = 1}^{n}
\frac{{\rm e}^{-2\nu^\prime  (n-\ell )}}{\ell^{d/2 -\epsilon}}. 
\]
}%
Since $d\geq 3$ and $\epsilon <{1/2}$, this implies that
$
\sum_{n}\bbE \bigl[\bigl( \sum_{\ell\leq n} Z_\ell^{(n)}\bigr)^2\bigr] < \infty 
.
$
\newline
Consider now the left-hand side of~\eqref{eq:largem}. For each $\ell\leq n$, 
the $Z^{(n)}_\ell$-sum on the right-hand side of~\eqref{eq:largem} can be 
rewritten in the form~\eqref{eq:Znl} with $a^{(n)}_{x,\ell} (y ,m) = 
b^{(n)}_{x,\ell} (y ,m)\One_{\{m>n-\ell\}}$. 
In this case, the inequality~\eqref{eq:abar} is satisfied 
for the array $\lbr a^{(n)}_{x,\ell} (y ,m)\rbr$ 
with any $\nu^\prime < \nu/2$ and  $\hat
a^{(n)}_{{x} , \ell} \leqs {\rm e}^{-\nu (n-\ell )/2}\df \hat a_\ell^{(n)}$.}
\end{proof}
\end{lemma}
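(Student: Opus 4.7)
The plan is to prove the main assertion \eqref{eq:conv-to-zero-exp} by applying McLeish's maximal inequality separately for each fixed $n$, and then to deduce the corollary \eqref{eq:largem} by verifying that the array $a^{(n)}_{x,\ell}(y,m) = b^{(n)}_{x,\ell}(y,m)\One_{\{m>n-\ell\}}$ satisfies the hypothesis of the main part with the required exponential damping in $n-\ell$.

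For the main claim, I would fix $n$ and view $\{Z^{(n)}_\ell\}_{\ell=1}^n$ as a mixingale adapted to the filtration $\{\calA_m\}$ of \eqref{eq:sigma-algebras}. The assumption $\hat a^{(n)}_\ell \leqs {\rm e}^{-\nu'(n-\ell)}$, substituted into \eqref{eq:GUpper2}, yields both mixingale conditions \eqref{eq:McLeish} with
$$ \bigl(\psi^{(n)}_\ell\bigr)^2 \df \frac{{\rm e}^{-2\nu'(n-\ell)}}{\ell^{d/2-\epsilon}}. $$
Applying McLeish's inequality \eqref{eq:Maximal} with $n_1=1$, $n_2=n$ then gives
$$ \bbE\Bigl[\Bigl(\sum_{\ell=1}^n Z^{(n)}_\ell\Bigr)^2\Bigr] \leqs \sum_{\ell=1}^n \frac{{\rm e}^{-2\nu'(n-\ell)}}{\ell^{d/2-\epsilon}}. $$
The next step is to estimate this right-hand side by splitting the sum at $\ell=n/2$: the part with $\ell\leq n/2$ is exponentially small in $n$ owing to the damping factor, while for $\ell>n/2$ one bounds $\ell^{-(d/2-\epsilon)}\leqs n^{-(d/2-\epsilon)}$ and sums a geometric series. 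This yields a bound of order $n^{-(d/2-\epsilon)}$. Since $d\geq 3$ and $\epsilon\leq\rho/2 <1/24$ one has $d/2 - \epsilon > 1$, so $\sum_n n^{-(d/2-\epsilon)}<\infty$. This supplies $\bbL_2$-convergence to zero immediately and, via Chebyshev plus Borel--Cantelli, almost-sure convergence of $\sum_{\ell\leq n} Z^{(n)}_\ell$ to zero.

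For the corollary \eqref{eq:largem}, the only remaining task is to check that $a^{(n)}_{x,\ell}(y,m)=b^{(n)}_{x,\ell}(y,m)\One_{\{m>n-\ell\}}$ meets \eqref{eq:abar} with an $\hat a^{(n)}_{x,\ell}$ that decays exponentially in $n-\ell$. Starting from the hypothesis \eqref{eq:abar} on $b$ (with constant $\hat b^{(n)}_{x,\ell}\leqs 1$), the indicator restricts the $m$-sum to $m>n-\ell$; splitting ${\rm e}^{-\nu m}={\rm e}^{-\nu m/2}{\rm e}^{-\nu m/2}$ and bounding the second factor by ${\rm e}^{-\nu(n-\ell)/2}$ produces \eqref{eq:abar} for $a$ with exponent $\nu''=\nu/2$ and $\hat a^{(n)}_{x,\ell}\leqs {\rm e}^{-\nu(n-\ell)/2}$. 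Since this uniform bound also satisfies $\hat a^{(n)}_{x,\ell}\leqs \hat a^{(n)}_\ell \df {\rm e}^{-\nu(n-\ell)/2}$, the hypothesis of the first part applies and the conclusion follows.

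The main obstacle, as I see it, is not conceptual but numerical: the McLeish step produces a prefactor $\ell^{-(d/2-\epsilon)}$ which must be strictly summable in $n$ to feed Borel--Cantelli. This forces $d/2-\epsilon>1$ in the critical dimension $d=3$, which is exactly why the very-weak-disorder exponent $\rho$ in \eqref{eq:WDBound} has been calibrated small (any $\rho<1/12$ is comfortable). If one were only to retain an $\bbL_2$ bound without the exponential factor ${\rm e}^{-2\nu'(n-\ell)}$, the sum over $\ell$ would not decay in $n$ and one would at best obtain convergence in probability rather than the almost-sure statement claimed here.
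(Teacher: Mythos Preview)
Your proposal is correct and follows essentially the same route as the paper: apply McLeish's inequality \eqref{eq:Maximal} for each fixed $n$ with $\psi_\ell^2 = {\rm e}^{-2\nu'(n-\ell)}/\ell^{d/2-\epsilon}$, observe that the resulting bound is summable in $n$ (your splitting at $\ell=n/2$ makes explicit what the paper leaves as a one-line remark), and then reduce \eqref{eq:largem} to the first part by absorbing the indicator $\One_{\{m>n-\ell\}}$ into an exponential damping via the factorisation ${\rm e}^{-\nu m}={\rm e}^{-\nu m/2}{\rm e}^{-\nu m/2}$. The only cosmetic difference is that the paper phrases the summability conclusion as $\sum_n \bbE[(\sum_{\ell\leq n}Z_\ell^{(n)})^2]<\infty$ directly, whereas you spell out Chebyshev and Borel--Cantelli.
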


\subsection{Multi-Dimensional Renewal and Asymptotics of $\tq{n}$}
Let us turn to the quenched asymptotics of $\tq{n}$. By construction,
\be
\label{eq:htqFormula}
\tq{z, n} = \sum_{m=0}^{n-1}\sum_x \tq{x ,m} f^{\theta_x\omega}_{z-x ,n-m}
\ \ {\rm and}\ \ \tq{ n} = \sum_z \tq{z, n} .
\ee
The claim~\eqref{eq:claima}  of Theorem~\ref{thm:B} follows from:
\begin{theorem}
\label{thm:tqnAsympt}
Assume that~\eqref{eq:WDBound} holds. Then,
\be
\label{eq:tqnAsympt}
\lim_{n\to\infty} \tq{n} = \frac{1}{\kappa}\Bigl( 1 + \sum_{x,y} \tq{x}\,
\bigl( f^{\theta_x\omega}_{y-x}  -\fa{y-x}\bigr) \Bigr\}
\df \frac{1}{\kappa} s^\omega \in (0, \infty ) ,
\ee
$\bbP$-a.s.\ and in $\bbL_2$ on the event $\lbr 0\in {\rm Cl}_\infty^{h } (V)
\rbr$.
\end{theorem}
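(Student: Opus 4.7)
The plan is to exploit the one-step renewal identity~\eqref{eq:htqFormula} combined with the McLeish machinery of Lemmas~\ref{lem:LemUB-1}--\ref{lem:largem}. Summing \eqref{eq:htqFormula} over $z$ and adding/subtracting the annealed irreducible $\fa{n-\ell}=\sum_y\fa{y,n-\ell}$, one obtains the quenched renewal identity
\[
\tq{n}=\sum_{\ell=0}^{n-1}\tq{\ell}\,\fa{n-\ell}+R_n^\omega,\qquad R_n^\omega\df\sum_{\ell<n}\sum_{x,y}\tq{x,\ell}\bigl(\fxq{y,n-\ell}-\fa{y,n-\ell}\bigr).
\]
Since the annealed counterpart is $\ta{n}=\sum_{\ell<n}\ta{\ell}\fa{n-\ell}$ (with $\ta{0}=1$), inverting the $(I-F)$-convolution against the kernel $\ta{\cdot}$ yields $\tq{n}=\ta{n}+\sum_{m=1}^n R_m^\omega\,\ta{n-m}$.

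Second, I would rearrange $\sum_{m\leq n}R_m^\omega\ta{n-m}=\sum_{\ell<n}Z_\ell^{(n)}$, where
\[
Z_\ell^{(n)}=\sum_x\tq{x,\ell}\sum_{y,k}a^{(n)}_{x,\ell}(y,k)\bigl(\fxq{y,k}-\fa{y,k}\bigr),\quad a^{(n)}_{x,\ell}(y,k)=\ta{n-\ell-k}\One_{\{1\leq k\leq n-\ell\}}.
\]
This matches the template~\eqref{eq:Znl}; since $\ta{\cdot}$ is uniformly bounded and $e^{-c_2 k}(2k+1)^{d+1}$ decays exponentially, hypothesis~\eqref{eq:abar} holds with $\hat a^{(n)}_{x,\ell}\leqs 1$. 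Lemma~\ref{lem:LemUB-2} then furnishes the McLeish hypotheses~\eqref{eq:McLeish} with $\psi_\ell^2\leqs\ell^{-(d/2-\epsilon)}$; because $d\geq 3$ and $\epsilon<1/2$, this is summable, so~\eqref{eq:Maximal} yields $\sup_n\bbE\bigl[\max_r\bigl(\sum_{\ell\leq r}Z_\ell^{(n)}\bigr)^2\bigr]\leqs 1$.

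Third, to pass to the limit in $n$, I decompose $\ta{n-\ell-k}=\tfrac1\kappa+(\ta{n-\ell-k}-\tfrac1\kappa)$. By the exponential rate implicit in~\eqref{eq:tanAsympt}, the second factor decays like $e^{-c(n-\ell-k)}$, so the corresponding contribution satisfies~\eqref{eq:abar} with $\hat a^{(n)}_{x,\ell}\leqs e^{-c(n-\ell)/2}$ and vanishes, $\bbP$-a.s.\ and in $\bbL_2$, by Lemma~\ref{lem:largem}. The principal piece $\tfrac1\kappa\sum_{\ell<n}\tilde Z_\ell^{(n)}$, with $\tilde Z_\ell^{(n)}=\sum_x\tq{x,\ell}\sum_{y,k\leq n-\ell}(\fxq{y,k}-\fa{y,k})$, converges to $\tfrac1\kappa S^\omega$ once the cutoff $k\leq n-\ell$ is removed (a further tail estimate covered by Lemma~\ref{lem:largem}), where
\[
S^\omega=\sum_{x,\ell}\tq{x,\ell}\sum_{y,k}(\fxq{y,k}-\fa{y,k})=\sum_{x,y}\tq{x}\bigl(\fxq{y-x}-\fa{y-x}\bigr).
\]
Combined with $\ta{n}\to 1/\kappa$, this establishes $\tq{n}\to\tfrac1\kappa(1+S^\omega)$ $\bbP$-a.s.\ and in $\bbL_2$.

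Strict positivity of $1+S^\omega$ on $\{0\in{\rm Cl}_\infty^h(V)\}$ would follow from a conditioning/ergodicity argument \`a la Bolthausen: if $1+S^\omega=0$ on a positive-measure subset of the cluster event, then shift-invariance of the environment together with the a.s.\ existence of an infinite path starting at $0$ would force $1+S^{\theta_x\omega}=0$ at infinitely many $x$ along the cluster, contradicting the non-degeneracy of $\bbE[(1+S^\omega)^2]$. The main technical obstacle is Step~3: although Lemmas~\ref{lem:LemUB-1}--\ref{lem:LemUB-2} and the McLeish inequality~\eqref{eq:Maximal} readily supply the uniform $\bbL_2$ bound, the limit transfer $n\to\infty$ must propagate through both the convolution kernel $\ta{n-\ell-k}\to 1/\kappa$ and the truncation $k\leq n-\ell$, each requiring a separate application of Lemma~\ref{lem:largem} with careful bookkeeping of decay rates.
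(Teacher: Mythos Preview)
Your convolution-inversion identity $\tq{n}=\ta{n}+\sum_{m\leq n}R^\omega_m\,\ta{n-m}$, once the double sum is re-indexed, is precisely the Sinai expansion~\eqref{eq:Sinai0} summed over the endpoint $z$, and your subsequent splitting $\ta{n-\ell-k}=\tfrac1\kappa+(\ta{n-\ell-k}-\tfrac1\kappa)$ together with the removal of the cutoff $k\leq n-\ell$ reproduces exactly the decomposition~\eqref{eq:Sinai}--\eqref{eq:EpsTerm} into $\tfrac1\kappa s_n^\omega$, $\epsilon_{n,1}^\omega$ and $\epsilon_{n,2}^\omega$. The handling of each piece via Lemma~\ref{lem:LemUB-2}, Lemma~\ref{lem:largem} and Remark~\ref{re:conv} is identical to the paper's.

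The one genuine gap is the strict positivity of $s^\omega$. Your Bolthausen-style sketch does not go through as written: for stretched (as opposed to directed) polymers there is no natural martingale/tail-$\sigma$-algebra structure, so $\{s^\omega=0\}$ is not obviously a zero--one event, and the assertion that ``$s^{\theta_x\omega}=0$ at infinitely many $x$ along the cluster contradicts non-degeneracy of $\bbE[(s^\omega)^2]$'' is a non sequitur---$s^\omega$ could vanish on a non-trivial subset of $\Omega$ while still having $\bbE[s^\omega]=1$ and finite second moment. The paper does not prove positivity from scratch here; it defers to the argument of~\cite[Subsection~4.5]{IV-Crossing}, which exploits the cone-confined renewal structure and the assumed $\bbP$-a.s.\ connectivity of ${\rm Cl}_\infty^h(V)$ in an essential way, and that is the ingredient your sketch is missing.
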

\begin{proof}
Part of the proof appeared in Subsection~5.3 of
the review paper \cite{IV-Erwin}.
We rely on an expansion  similar
to the one employed by Sinai \cite{Sinai} and rewrite~\eqref{eq:htqFormula} as
(see {the beginning of Section~5.3} of \cite{IV-Erwin}   for details)
\be
\label{eq:Sinai0}
\tq{z ,n} = \ta{z, n} +
\sum_{\ell =0}^{n-1}\sum_{m=1}^{n-\ell}\sum_{r=0}^{n-\ell-m}
\sum_{x, y}
\tq{x ,\ell}\lb  f^{\theta_x\omega}_{y-x , m} - \fa{y-x ,m}\rb \ta{z-y ,r} .
\ee
In this way, $\tq{n}$ ({(56) in Section~5.3} of \cite{IV-Erwin}) can be 
represented as
\be
\label{eq:Sinai}
\tq{n} = \frac{1}{\kappa} s^\omega_n
+  \epsilon_n^\omega + \bigl( \ta{n} - \frac{1}{\kappa}\bigr)
\\
\ee
where
\be
\label{eq:snterm}
s^\omega_n = 1 + \sum_{\ell \leq n} \sum_x
\tq{x ,\ell}
 \lb\fxq{\, }  -1\rb,
\ee
and
the correction term $\epsilon_n^\omega =
- \epsilon_{n ,1}^\omega + \epsilon_{n ,2}^\omega
$ is given by
\be
\label{eq:EpsTerm}
\epsilon_n^\omega =
- \frac{1}{\kappa} \sumtwo{\ell\leq n}{m >  n -\ell}\sum_x
\tq{x ,\ell}\lb f^{\theta_x\omega}_{m} - \fa{m}\rb  +
 \sum_{\ell +m +r =n} \sum_{x }
\tq{x ,\ell}\lb f^{\theta_x\omega}_{m} - \fa{m}\rb \bigl( \ta{r} 
-\frac{1}{\kappa}\bigr)
.
\ee
By~\eqref{eq:tanAsympt}  $\ta{n} - \frac1{\kappa}$ tends to zero.
We claim that, $\bbP$-a.s.,
\be
\label{eq:EpsConv}
\lim_{n\to\infty} s^\omega_n = s^\omega \ \ {\rm and}\ \
\sum_n \bbE [( \epsilon^ \omega_n )^2] <\infty .
\ee

\paragraph{Convergence of $s^\omega_n$.}

Following the discussion in Subsection~4.5 of~\cite{IV-Crossing}, one readily
verifies that $s^\omega >0$ on the event  $\lbr 0\in {\rm Cl}_\infty^{h } (V)
\rbr$.
It remains to check~\eqref{eq:EpsConv}.
\smallskip

Let us rewrite $s_n^\omega$ as
\be
\label{eq:Xl}
 s_n^\omega -1 = \sum_{\ell\leq n}
 \sum_x \tq{x ,\ell}\lb\fxq{\, }  -1\rb \df \sum_{\ell = 0}^n Z_\ell .
\ee
The representation complies with~\eqref{eq:Znl} and~\eqref{eq:abar} with
$\hat{a}^{(n)}_{x, \ell}\leqs 1$ { 
and any positive $\nu <c_2$. 
}
Hence, by~\eqref{eq:GUpper2},
\be
\label{eq:Condl-m}
\bbE\bigl[\bigl( \bbE(Z_\ell \,|\, \calA_{\ell-k}) \bigr)^2\bigr],\;
{
\bbE\bigl[\bigl(
Z_\ell - \bbE(Z_\ell \,|\, \calA_{\ell+k}) \bigr)^2\bigr]
}
\leqs
\frac{1}{\ell^{ d  /2 -\epsilon}}\cdot \frac{1}{(1+k)^{1+\epsilon}} .
\ee
Since $d\geq 3$ and $\epsilon <{1/2}$,
{
Remark~\ref{re:conv} applies
}
 and 
 $\lim_{n\to\infty} s^\omega_n = 1
+\sum_0^\infty Z_\ell$ converges $\bbP$-a.s.\ and in $\bbL_2$ . \qed 

\paragraph{The $\epsilon_n^\omega$ term.}
Let us turn now to the correction term $\epsilon_n^\omega$
in~\eqref{eq:EpsTerm}.
The first summand to estimate is
\be
\label{eq:eps1n}
 \epsilon_{n, 1}^\omega =
\sum_{\ell\leq   n}
\sum_x
\tq{x ,\ell}
\sum_{m >n- \ell} \lb f^{\theta_x\omega}_{m} - \fa{m}\rb
\ee
It tends to zero by Lemma~\ref{lem:largem}.
The second summand is
\[
 \epsilon_{n,2}^\omega =  \sum_{\ell +m +r =n} \sum_{x }
\tq{x ,\ell}\lb f^{\theta_x\omega}_{m} - \fa{m}\rb \bigl( \ta{r} 
-\frac{1}{\kappa}\bigr)
\]
Since  $\ta{r} - 1/\kappa$ is exponentially decaying in $r$,
it is easy to see that~\eqref{eq:abar} still holds with 
{
$a^{(n)}_{x,\ell} \leqs \hat a^{(n)}_\ell \df {\rm e}^{-c_4 (n-\ell )}$,
for any positive $\nu <c_2$ and some $c_4 = c_4 (\beta ) >0$,
}
and {Lemma~\ref{lem:largem} applies}.
\end{proof}

\subsection{Quenched CLT}

To facilitate notation set  $\alpha_n = \alpha/\sqrt{n}$. For $r=1, 2, \dots$
define
\[
\qS{r}{\omega}  (\alpha )\df
\sum_{z} \tq{z ,r} {\rm e}^{i \alpha \cdot (z - rv )}
\]
We are studying  $\qS{n}{\omega} (\alpha_n )$.
The asymptotics of $\aS{n} (\alpha_n )= \bbE \qS{n}{\omega} (\alpha_n )$ is
given in~\eqref{eq:AnCLT}.  Using~\eqref{eq:Sinai0},
\be
\label{eq:S1}
\qS{n}{\omega}  (\alpha_n  ) = \aS{n} (\alpha_n  ) +
\sum_{\ell +m +r =n} \sum_{x, y , z }
\tq{x ,\ell}\lb f^{\alpha _x\omega}_{y-x, m} - \fa{y-x, m}\rb \ta{z- y, r}
{\rm e}^{i   (z - nv )\cdot\alpha_n  }.
\ee
Define
\be
\label{eq:GFunction}
 g^\omega_m (\alpha ) = \sum_{{y}}{\rm e}^{i (y -mv )\cdot \alpha }\lb \fq{y ,m
}- \fa{y, m}\rb .
\ee
Note that $g^\omega_m ( 0 ) = \fq{m} - \fa{m}$ 
{ 
and that $g^\omega_m (\alpha ) - g^\omega_m ( 0 ) = 
\sum_{{y}}\lb {\rm e}^{i (y -mv )\cdot \alpha } - 1\rb \lb \fq{y ,m
}- \fa{y, m}\rb$. 
}
We can rewrite~\eqref{eq:S1} as
\be
\label{eq:CLTBasic}
\qS{n}{\omega}  (\alpha_n  )  = \aS{n} (\alpha_n  )
+ \sum_{\ell +m +r =n}
\aS{r} (\alpha_n)
\sum_x
\tq{x ,\ell}
{\rm e}^{i (x-\ell v)\cdot \alpha_n} g_m^{\theta_x\omega}
\lb \alpha_n\rb .
\ee
Expanding terms in the products $\aS{r} (\alpha_n ){\rm e}^{i (x-\ell v)\cdot
\alpha_n} g_m^{\theta_x\omega}\lb
\alpha_n\rb $
as
\[
 \aS{r} (\alpha_n) = \aS{n}(\alpha_n ) + \lb \aS{r} (\alpha_n ) -
\aS{n}(\alpha_n ) \rb
\]
and, accordingly,
\[
 {\rm e}^{i (x-\ell v)\cdot \alpha_n} =
1 + \lb {\rm e}^{i (x-\ell v)\cdot\alpha_n}
-1\rb ,\ \
g_m^{ \omega}
\lb \alpha_n\rb
=
g_m^{ \omega}
\lb 0 \rb
 + \lb g_m^{ \omega}
\lb \alpha_n \rb  -
g_m^{ \omega}
\lb 0 \rb\rb ,
\]
we rewrite~\eqref{eq:CLTBasic} as:
\be
\label{eq:S2}
\begin{split}
\qS{n}{\omega}  (\alpha_n  ) &= \aS{n} (\alpha_n  )
\Bigl( 1+
 \sum_{\ell +m \leq n}
\sum_x
\tq{x ,\ell}
\lb \fxq{m } - \fa{m}\rb
\Bigr)
\\
&+
\aS{n} (\alpha_n )
\sum_{\ell +m \leq n}
\sum_x \tq{x ,\ell}
\lb g^{\theta_x \omega}_m  \lb \alpha_n\rb -
g^{\theta_x \omega}_m (0)\rb
\\
&+
\sum_{\ell +m +r =n} \lb \aS{r} (\alpha_n ) - \aS{n} (\alpha_n )\rb
\sum_x
\tq{x ,\ell}
\lb \fxq{m } - \fa{m}\rb\\
&+
\aS{n} (\alpha_n )
\sum_{\ell +m \leq n}
\sum_x \tq{x ,\ell}
\lb {\rm e}^{i(x -\ell v)\cdot \alpha_n} -1\rb
\lb \fxq{m } - \fa{m}\rb
\\
&+ \text{cross-terms}
\\
&\df
\aS{n} (\alpha_n)
\Bigl( 1+
 \sum_{\ell +m \leq n}
\sum_x
\tq{x ,\ell}
\lb \fxq{m } - \fa{m}\rb
\Bigr)
+\sum_{i=1}^3 \eta_{n, i}^\omega +
 \text{cross-terms} .
\end{split}
\ee
By Theorem~\ref{thm:tqnAsympt} {the sequence of random factors of}
  $\aS{n} (\alpha)$
tend  to $s^\omega$.
The cross terms are of lower order and we shall {briefly discuss them at
the end of the present section}. The crux of the
matter is to prove:
\begin{theorem}
 \label{thm:QuCLT}
For every $\alpha\in\bbR^d$ the correction terms $\eta_{n,i}^\omega$
in~\eqref{eq:S2}
satisfy :
\be
\label{eq:CortermsCLT}
\text{For $i=1,2,3$}\ \ \lim_{n\to\infty}  \eta_{n,i}^\omega  = 0\ \
\text{$\bbP$-a.s.\ and
in $\bbL_2 (\Omega )$} .
\ee
\end{theorem}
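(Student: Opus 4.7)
The plan is to treat each correction term $\eta_{n,i}^\omega$ uniformly: recast it in the form $\sum_{\ell\leq n}Z_\ell^{(n)}$ of~\eqref{eq:Znl}, identify the array $\{a_{x,\ell}^{(n)}(y,m)\}$ and a majorant $\hat{a}^{(n)}_{x,\ell}$ satisfying~\eqref{eq:abar}, and then combine the variance estimates of Lemmas~\ref{lem:LemUB-1}--\ref{lem:LemUB-2} with McLeish's maximal inequality~\eqref{eq:Maximal}. The $\bbL_2$-claim reduces to showing $\bbE[(\eta_{n,i}^\omega)^2]\to 0$ with some polynomial rate. For the almost-sure claim, we first deduce summability of $\bbE[(\eta_{n_k,i}^\omega)^2]$ along a sparse subsequence $n_k=2^k$, which yields $\eta_{n_k,i}^\omega\to 0$ $\bbP$-a.s.\ by Borel--Cantelli, and then control fluctuations on each block $[n_k,n_{k+1}]$ by applying~\eqref{eq:Maximal} to an appropriate telescoping decomposition of the partial sums.

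The essential step in each case is the extraction of a small prefactor of order $n^{-\delta}$. For $\eta_{n,1}^\omega$, the expansion $g_m^\omega(\alpha_n)-g_m^\omega(0)=\sum_y(e^{i(y-mv)\cdot\alpha_n}-1)(\fq{y,m}-\fa{y,m})$ and $|e^{ix}-1|\leq|x|$ yield $|a^{(n)}_{x,\ell}(y,m)|\leqs(|y|+m)/\sqrt{n}$; absorbing the polynomial factor into the exponential tail~\eqref{eq:exp-tails} and using the uniform bound $|\aS{n}(\alpha_n)|\leqs 1$ from~\eqref{eq:AnCLT}, one obtains $\hat{a}^{(n)}_\ell\leqs 1/\sqrt{n}$, so that Lemma~\ref{lem:LemUB-2} produces $\bbE[(\eta_{n,1}^\omega)^2]\leqs 1/n$. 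For $\eta_{n,2}^\omega$, the Gaussian smoothness of $\aS{\cdot}(\alpha_n)$ encoded in~\eqref{eq:AnCLT} supplies the \emph{uniform} estimate $|\aS{r}(\alpha_n)-\aS{n}(\alpha_n)|\leqs(n-r)/n$ on the constraint $r=n-\ell-m$; after reabsorbing the $m$-dependence into the exponential tail, $\hat{a}^{(n)}_\ell\leqs \ell/n$ and Lemma~\ref{lem:LemUB-2} again yields a polynomially vanishing variance.

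The main obstacle is $\eta_{n,3}^\omega$: the factor $|e^{i(x-\ell v)\cdot\alpha_n}-1|$ is \emph{not} uniformly small in $x$, and cannot be absorbed into an $\ell$-dependent majorant as in Lemma~\ref{lem:LemUB-2}. Instead, one retains the $x$-dependence in the majorant $\hat{a}^{(n)}_{x,\ell}=|e^{i(x-\ell v)\cdot\alpha_n}-1|$ and applies Lemma~\ref{lem:LemUB-1} directly: the bound $|e^{i(x-\ell v)\cdot\alpha_n}-1|^2\leqs|\alpha|^2|x-\ell v|^2/n$ integrated against the Gaussian weight $e^{-c_2|x-\ell v|^2/\ell}$ appearing in~\eqref{eq:GUpper1} produces an extra factor $\ell/n$, so summing over $\ell$ gives $\bbE[(\eta_{n,3}^\omega)^2]\leqs(\log n)/n$ in dimension $d=3$ and $\leqs 1/n$ for $d\geq 4$. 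This suffices for $\bbL_2$-convergence and, via the subsequence argument and~\eqref{eq:Maximal}, for the $\bbP$-a.s.\ statement. Finally, the cross-terms in~\eqref{eq:S2} are pointwise products of two of the above small factors with a bounded residual piece, and a short Cauchy--Schwarz bound reduces them to exactly the same framework.
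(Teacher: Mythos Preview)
Your overall architecture—recast as $\sum_\ell Z_\ell^{(n)}$, identify $\hat a^{(n)}_{x,\ell}$, feed Lemmas~\ref{lem:LemUB-1}--\ref{lem:LemUB-2} into McLeish, then pass from a sparse subsequence to the full sequence via fluctuation control—is exactly the paper's strategy, and your treatment of each $\eta_{n,i}^\omega$ at the $\bbL_2$ level is essentially correct. There is, however, a genuine gap in the almost-sure step.

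The difficulty is the choice $n_k=2^k$. The coefficients $a^{(n)}_{x,\ell}(y,m)$ depend on $n$, so in the decomposition
\[
\eta_{N+r}^\omega-\eta_N^\omega=\sum_{\ell\leq N}\bigl(Z_\ell^{(N+r)}-Z_\ell^{(N)}\bigr)+\sum_{\ell=N+1}^{N+r}Z_\ell^{(\cdot)}
\]
only the second sum is a genuine partial sum in $\ell$ to which~\eqref{eq:Maximal} applies. The first ``coefficient-change'' sum has no mixingale structure in the variable $r$: each $Z_\ell^{(N+r)}-Z_\ell^{(N)}$ depends on $r$, and there is no filtration in $r$ to invoke. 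One is forced into a union bound over $r\leq R$. For $\eta_{n,1}^\omega$ the coefficient change satisfies $\hat b^{(N,r)}_\ell\leqs r/N^{3/2}$, giving variance $\leqs r^2/N^3$ and hence $\bbE\max_{r\leq R}\leqs R^3/N^3$; for $\eta_{n,2}^\omega$ in $d=3$ one gets $\leqs R^3/N^{5/2-\epsilon}$. With geometric blocks $R\cong N$ these are constant and $N^{1/2+\epsilon}$ respectively—neither summable in $k$. The paper resolves this by taking \emph{polynomial} lacunary sequences ($n^{1+\delta}$ for $\eta_{n,1}^\omega$, $n^{2+\delta}$ for $\eta_{n,2}^\omega,\eta_{n,3}^\omega$), so that $R\ll N$ and the union bound is affordable.

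A smaller point: your rate $\bbE[(\eta_{n,3}^\omega)^2]\leqs(\log n)/n$ is too optimistic. The quantity you computed is $\sum_\ell\bbE[(Z_\ell^{(n)})^2]$, but McLeish requires the $\psi_\ell^2$ of~\eqref{eq:McLeish}, and passing from the Gaussian $k$-decay in~\eqref{eq:GUpper1} to $(1+k)^{-(1+\epsilon)}$ via~\eqref{eq:m-lbound} costs a factor $\ell^{(1+\epsilon)/2}$. The correct bound (for $d=3$) is $\psi_\ell^2\leqs\frac{1}{n\ell^{1/2-\epsilon}}$, whence $\Var(\hat\eta_{n,3}^\omega)\leqs n^{-1/2+\epsilon}$, as in the paper. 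This does not affect $\bbL_2$-convergence, but it does feed into the choice of subsequence.
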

Once~\eqref{eq:CortermsCLT} is established, we readily infer
from~\eqref{eq:tanAsympt}, \eqref{eq:AnCLT} and~\eqref{eq:claima} that
\be
\label{eq:QuCLT}
\lim_{n\to\infty} \frac{\qS{n}{\omega}  (\alpha/\sqrt{n} )}{\tq{n}} = 
\exp\bigl\{ -
\tfrac12\Sigma\, \alpha \cdot\alpha\bigr\} ,
\ee
$\bbP$-a.s.\ on the event
$\lbr 0\in {\rm Cl}_\infty^{h } (V) \rbr$
for every $\alpha\in\bbR^{d+1}$ fixed. This is precisely~\eqref{eq:claimc}
of Theorem~\ref{thm:B}.

\section{Correction Terms}
\label{sec:proofeta}
In this Section, we prove~\eqref{eq:CortermsCLT}. The  correction terms
$ \eta_{n,i}^\omega ;\, i=1, 2, 3,$ will be treated separately. 
Recall that we are working with $\epsilon <1/6$ such that~\eqref{eq:WDBound}
holds with 
$\rho = \epsilon/2$.
% For the rest of this Section, let us fix a number $\delta >0$ such that
%\begin{equation}
% \label{eq:delta-eps}
%0 <\frac{\delta}{2} - \epsilon (2 +\delta) < \frac{\delta}{2} + \epsilon (2
%+\delta) < 1 .
%\end{equation}

\paragraph{The $\eta_{n, 1}^\omega $ term .}
Consider
\[
\frac{\eta_{n ,1}^\omega}{\aS{n} (\alpha_n )} =
\sum_{\ell\leq n}\sum_x \tq{x,\ell}\sum_{m\leq n-\ell}\sum_y
\lb {\rm e}^{i (y-mv )\cdot\alpha_n} -1\rb\lb
\fxq{y,m} - \fa{y,m}\rb
\]
By Lemma~\ref{lem:largem}, the constraint $m\leq n-\ell$ might be removed, and
we need to prove the convergence to zero of
\be 
\label{eq:eta1-form}
 \hat\eta_{n, 1}^\omega \df
\sum_{\ell\leq n}\sum_x \tq{x,\ell}\sum_{m ,y}
a^{(n)}_{x, \ell} (y, m )
\lb
\fxq{y,m} - \fa{y,m}\rb \df  \sum_{\ell\leq n}   Z^{(n)}_\ell .
\ee
with $a^{(n)}_{x, \ell} (y, m ) =
\lb {\rm e}^{i (y-mv )\cdot\alpha_n} -1\rb$. 
\begin{lemma}
\label{lem:2Term}
In the very weak disorder regime,
\be
\label{eq:2Term}
\lim_{n\to\infty} \hat \eta_{n ,1}^\omega = 0 ,
\ee
$\bbP$-a.s.\ and in $\bbL_2$  for each $\alpha\in \bbR^d$ fixed.
\end{lemma}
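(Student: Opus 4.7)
\emph{Plan of proof.} The strategy is to cast $\hat\eta_{n,1}^\omega = \sum_{\ell\leq n} Z_\ell^{(n)}$ in the form~\eqref{eq:Znl}, verify the coefficient hypothesis~\eqref{eq:abar} for the array $a^{(n)}_{x,\ell}(y,m) = e^{i(y-mv)\cdot\alpha_n} - 1$ with an amplitude $\hat{a}^{(n)}$ carrying an explicit $n$-dependent decay, and then apply Lemma~\ref{lem:LemUB-2} together with McLeish's maximal inequality~\eqref{eq:Maximal}. Note that $a^{(n)}_{x,\ell}$ does not actually depend on $(x,\ell)$, which simplifies the bookkeeping.

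\emph{Step 1: Coefficient bound and $\bbL_2$ convergence.} The elementary estimate $|e^{i\theta}-1|\leq\min(|\theta|,2)$ gives $|a^{(n)}_{x,\ell}(y,m)|\leq\min\bigl(|\alpha||y-mv|/\sqrt{n},\,2\bigr)$. Since $|y-mv|\leq m(1+|v|)$ whenever $|y|\leq m$ (Remark~\ref{rem:tn}), I would split the sum over $m$ at the scale $m\sim\sqrt{n}/|\alpha|$: in the linear-Taylor regime the prefactor $1/\sqrt{n}$ is visible directly and the polynomial growth in $m$ is absorbed by $e^{-c_2 m}$, while in the tail regime the factor $1/\sqrt{n}$ is provided by the super-polynomial smallness of $e^{-c_2\sqrt{n}/|\alpha|}$. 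This yields~\eqref{eq:abar} with $\hat{a}^{(n)}_{x,\ell}\leqs 1/\sqrt{n}$ (uniformly in $x,\ell$) and any $\nu<c_2$. Lemma~\ref{lem:LemUB-2} then delivers
\[
\bbE\bigl[\bigl(\bbE(Z^{(n)}_\ell\,|\,\calA_{\ell-k})\bigr)^2\bigr],\;\bbE\bigl[\bigl(Z^{(n)}_\ell-\bbE(Z^{(n)}_\ell\,|\,\calA_{\ell+k})\bigr)^2\bigr] \leqs \frac{1}{n\,\ell^{d/2-\epsilon}(1+k)^{1+\epsilon}},
\]
and McLeish's inequality~\eqref{eq:Maximal} applied with $\psi_\ell^2 = C/(n\ell^{d/2-\epsilon})$ gives $\bbE[(\hat\eta_{n,1}^\omega)^2]\leqs n^{-1}\sum_{\ell=1}^n\ell^{-(d/2-\epsilon)}\leqs 1/n$, using $d\geq 3$ and $\epsilon<1/2$ so that $d/2-\epsilon>1$. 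This settles $\bbL_2$ convergence to $0$.

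\emph{Step 2: Almost-sure convergence via Taylor expansion.} The $\bbL_2$ bound $1/n$ is not Borel--Cantelli summable, so I would peel off the linear term via $e^{i\theta}-1 = i\theta + r(\theta)$, where $|r(\theta)|\leq\min(\theta^2/2,\,2+|\theta|)$. This decomposes
\[
\hat\eta_{n,1}^\omega = \frac{i\alpha\cdot T_n^\omega}{\sqrt{n}} + R_n^\omega,
\]
where $T_n^\omega$ is built from the $n$-independent vector array $(y-mv)$ (at each component) and $R_n^\omega$ collects the Taylor remainders. For $T_n^\omega$, the coefficients grow only linearly in $m$, hence~\eqref{eq:abar} holds for the $(y-mv)$-array with $\hat{a}^{(n)}_{x,\ell}$ bounded uniformly in $n,x,\ell$; Lemma~\ref{lem:LemUB-2} combined with Remark~\ref{re:conv} then yields that $T_n^\omega$ converges $\bbP$-a.s.\ and in $\bbL_2$ to a finite random vector $T^\omega$, so $(i\alpha\cdot T_n^\omega)/\sqrt{n}\to 0$ $\bbP$-a.s. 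For $R_n^\omega$, the quadratic bound $|r(\theta)|\leq\theta^2/2$ in the regime $|\alpha||y-mv|\leq\sqrt{n}$ produces an explicit factor $1/n$, whereas in the complementary regime $m\gtrsim\sqrt{n}/|\alpha|$ the smallness of $e^{-c_2 m}$ dwarfs $1/n$. Hence~\eqref{eq:abar} is satisfied with $\hat{a}^{(n)}_{x,\ell}\leqs 1/n$, and Lemma~\ref{lem:LemUB-2} together with McLeish give $\bbE[(R_n^\omega)^2]\leqs 1/n^2$. Since $\sum_n 1/n^2<\infty$, Chebyshev and Borel--Cantelli force $R_n^\omega\to 0$ $\bbP$-a.s., completing the proof.

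\emph{Main obstacle.} The chief technical point is the careful verification of~\eqref{eq:abar} for the remainder $R_n^\omega$: the quadratic Taylor bound is only valid for $|\alpha_n|(y-mv)\leq 1$, and one must correctly orchestrate the crossover to the tail regime, where the exponential mass of irreducible paths has to be invoked to extract the full factor $1/n$ uniformly in $x,\ell$.
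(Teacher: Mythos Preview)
Your argument is correct, and it takes a genuinely different route from the paper's.

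The paper, after deriving the same $\bbL_2$ bound $\Var(\hat\eta_{n,1}^\omega)\leqs 1/n$ as your Step~1, passes to a lacunary subsequence $\{n^{1+\delta}\}$ (along which the variances are summable) and then controls the oscillation of $\hat\eta_{\cdot,1}^\omega$ over the gaps $[N,N+R]$ with $N\cong n^{1+\delta}$, $R\cong n^\delta$. This requires decomposing $\hat\eta_{N+r,1}^\omega-\hat\eta_{N,1}^\omega$ into $\sum_{\ell\leq N}(Z_\ell^{(N+r)}-Z_\ell^{(N)})+\sum_{N+1}^{N+r}Z_\ell^{(N+r)}$, estimating the difference array $b^{(N,r)}_{x,\ell}(y,m)=e^{i(y-mv)\cdot\alpha_{N+r}}-e^{i(y-mv)\cdot\alpha_N}$ via~\eqref{eq:abar} with $\hat b\leqs r/N^{3/2}$, and applying~\eqref{eq:Maximal} plus a union bound.

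Your Taylor-expansion approach exploits the special feature of $\hat\eta_{n,1}^\omega$ that the array $a^{(n)}_{x,\ell}(y,m)$ depends only on $(y,m)$. Peeling off the linear term produces an $n$-independent partial sum $T_n^\omega$ to which Remark~\ref{re:conv} applies directly, and a remainder with amplitude $1/n$ whose variance is summable; no lacunary step is needed. This is slicker here, but it does not transfer to the terms $\hat\eta_{n,2}^\omega$ and $\hat\eta_{n,3}^\omega$, where the coefficients carry an intrinsic $\ell$- or $x$-dependence and the leading Taylor term is itself $n$-dependent. The paper's lacunary-plus-fluctuation scheme, while heavier for $\hat\eta_{n,1}^\omega$, is the template that handles all three correction terms uniformly.

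One small remark: your ``main obstacle'' is a phantom. The bound $|e^{i\theta}-1-i\theta|\leq\theta^2/2$ holds for all real $\theta$ (integrate $|e^{is}-1|\leq|s|$), so no crossover at $|\alpha_n||y-mv|\sim 1$ is needed for $R_n^\omega$; the quadratic estimate is global and~\eqref{eq:abar} with $\hat a^{(n)}\leqs 1/n$ follows immediately.
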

\begin{proof}[Proof of Lemma~\ref{lem:2Term}]
For $a^{(n)}_{x, \ell} (y, m )$ as above, \eqref{eq:abar} is 
satisfied with 
{
$\hat a^{(n)}_{{x} ,\ell} \leqs \hat a^{(n)}_\ell \df  1/\sqrt{n}$
and any $\nu < c_2$.
}
 By
\eqref{eq:GUpper2} of Lemma~\ref{lem:LemUB-2}, 
\be
\label{eq:eta2-1}
\bbE\bigl[\bigl( \bbE(Z^{(n)}_\ell \,|\, \calA_{\ell-k}) \bigr)^2\bigr],\;
{
\bbE\bigl[\bigl( Z^{(n)}_\ell - \bbE(Z^{(n)}_\ell \,|\, \calA_{\ell+k}) 
\bigr)^2\bigr]
}
 \leqs
\frac{1}{n \ell^{d/2 -\epsilon}}\cdot\frac{1}{ (1+k)^{1+\epsilon}} .
\ee
By~\eqref{eq:Maximal},
$\Var \bigl( \hat \eta_{n, 1}^\omega\bigr) \leqs 1/n$. Consequently,
the lacunary sequence $\bigl\{ \hat\eta_{n^{1+\delta}, 1}^\omega \bigr\}$
converges to zero $\bbP$-a.s.\ and in $\bbL_2$ {for any $\delta >0$}.

It remains to {choose $\delta >0$ appropriately and to}
control fluctuations of $\hat\eta_{\cdot,1}^\omega$ on the intervals of the
form $[N, \dots , N+R]$ with
\be
\label{eq:nu2Range}
N\cong n^{1+\delta } \quad\text{and}\quad
R\cong (1+n)^{1+\delta} - n^{1+\delta}\cong n^\delta .
\ee
Now,
\be
\label{eq:eta2-2terms}
\hat \eta_{N +r , 1}^\omega - \hat\eta_{N ,1}^\omega =
\sum_{\ell\leq N} \bigl( Z^{(N+r)}_\ell - Z^{(N)}_\ell \bigr) +
\sum_{\ell=N+1}^{N+r} Z^{(N+r)}_\ell  .
\ee
We should not worry about the second term above:
\eqref{eq:Maximal} can still be applied to bound $\Var \bigl(
\sum_{\ell=N+1}^{N+r} Z^{(N+r)}_\ell \bigr)$ for each $r$ fixed.
By~\eqref{eq:eta2-1} and the union bound,
\[
\bbE\Bigl[
\max_{r\leq R}
\bigl(
\sum_{\ell = N+1}^{N+r}Z^{(N+r)}_\ell   \bigr)^2 \Bigr]
\leq \sum_{r=1}^R
\bbE
\Bigl[\bigl(
\sum_{\ell = N+1}^{N+r}Z^{(N+r)}_\ell \bigr)^2\Bigr]
\leqs \frac{R}{N^{d/2 - \epsilon }}
\cong \frac{1}{n^{{(1 +\delta) \lb \frac{d}{2} -\epsilon -\frac{\delta}{1+\delta}\rb}}} .
\]
The right-hand side above is summable (in $n$) by our
choice~\eqref{eq:nu2Range} {whenever $\frac{d}{2} -\epsilon -\frac{\delta}{1+\delta} >1$.
Since $d\geq 3$ and $\epsilon <1/2$, there are feasible choices of $\delta >0$ to ensure the latter.}
\newline
As for the first term in~\eqref{eq:eta2-2terms}, note that for $\ell\leq N$,
\be
\label{eq:eta2-coef}
a^{(N+r)}_{x,\ell} (y ,m ) -   a^{(N)}_{x,\ell} (y ,m )
=
\lb {\rm e}^{i (y - mv )\cdot\alpha_{N+r}}
- {\rm e}^{i (y - mv )\cdot\alpha_{N}}\rb
\df
b^{(N, r)}_{x,\ell} (y ,m ) .
\ee
The array $\bigl\{ b^{(N, r)}_{x,\ell} (y,m) \bigr\}$ 
satisfies~\eqref{eq:abar} with 
{
$\hat b^{(N, r)}_{x, \ell} \leqs \hat b^{(N, r)}_{\ell} \df  r/N^{3/2}$ 
 and any $\nu < c_2$.}
By~\eqref{eq:GUpper2}, \eqref{eq:Maximal} and the union bound,
\[
\bbE\Bigl[\max_{r\leq R} \Bigl( \sum_{\ell\leq N} \bigl(Z^{(N+r)}_\ell -
Z^{(N)}_\ell\bigr)\Bigr)^2\,\Bigr]
\leqs  \frac{R^3}{N^3} .
\]
By our choice~\eqref{eq:nu2Range}, $\frac{R^3}{N^3} \eqvs n^{-3}$ for any 
choice of $\delta >0$, and, consequently, the right-hand side above is summable.
\end{proof}

\paragraph{The $\eta_{n,2}^\omega $ term.} By~\eqref{eq:tanAsympt},
\be
\label{eq:E1}
 \frac{\aS{r}(\alpha_n)}{\aS{n}(\alpha_n )}
{= \frac{\ta{r} (i\alpha_n ){\rm e}^{-ir v\cdot\alpha_n}}
{\ta{n} (i\alpha_n ){\rm e}^{-in v\cdot\alpha_n}}}
= {\rm e}^{(r-n)\lb \mu (i\alpha_n )  -i v\cdot\alpha_n\rb}
\lb 1 + o({\rm e}^{-c_4 r})
\rb .
\ee
{ 
Set $\phi (\alpha ) = iv\cdot\alpha - \mu (i\alpha )$. The function $\phi$ is
defined
in a neighbourhood of the origin
and it is of quadratic growth  there. 
By Lemma~\ref{lem:largem}, the residual term $\smo{{\rm e}^{-c_4 r}}$ is 
negligible. 
Next, for $\ell\leq n$
the coefficients 
\be 
\label{eq:alm-case2}
a^{(n)}_{x, \ell }(y ,m ) = {\rm e}^{(m+\ell) \phi (\alpha_n )} - 1 
\ee
satisfy~\eqref{eq:abar} with $\hat a^{(n)}_{{x}, \ell} \leqs \hat a^{(n)}_{\ell}\df \ell/ n$
{and any $\nu < c_2$}.
}
Consequently, \eqref{eq:largem} enables to lift the restriction $m\leq n-\ell$.
%can  be lifted by Lemma~\ref{lem:largem}.
  Therefore, we need to prove
convergence to zero of
\be
\label{eq:eta2prime}
\hat\eta_{n ,2}^\omega
=
\sum_{\ell\leq n}\sum_x \tq{x, \ell}\sum_m \lb {\rm e}^{(m+\ell) \phi (\alpha_n )} - 1\rb
\lb \fxq{m} - \fa{m}\rb  \df \sum_{\ell\leq n}Z^{(n)}_\ell .
\ee
\begin{lemma}
\label{lem:1Term}
In the very weak disorder regime
\be
\label{eq:1Term}
\lim_{n\to\infty} \hat \eta_{n ,2}^\omega = 0 ,
\ee
$\bbP$-a.s.\ and in $\bbL_2$  for each $\alpha\in \bbR^d$ fixed.
\end{lemma}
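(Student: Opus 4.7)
The plan is to mirror the strategy used in Lemma~\ref{lem:2Term}: place $\hat\eta_{n,2}^\omega = \sum_{\ell\leq n} Z^{(n)}_\ell$ into the framework of Lemma~\ref{lem:LemUB-2}, apply McLeish's maximal inequality along a lacunary subsequence, and control the fluctuations on the intermediate intervals.

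The first task is to verify~\eqref{eq:abar} for the coefficients $a^{(n)}_{x,\ell}(y,m) = {\rm e}^{(m+\ell)\phi(\alpha_n)} - 1$. Since $\phi$ vanishes quadratically at the origin, $|\phi(\alpha_n)| \leqs 1/n$. Using $|{\rm e}^z - 1| \leq |z|{\rm e}^{|z|}$ and the fact that ${\rm e}^{-c_2 m}$ absorbs any polynomial factor in $m$ at the cost of replacing $c_2$ by a slightly smaller $\nu$, one obtains ${\rm e}^{-c_2 m}\sum_{|y|\leq m}|a^{(n)}_{x,\ell}(y,m)| \leqs {\rm e}^{-\nu m}(\ell+1)/n$, so~\eqref{eq:abar} holds with $\hat a^{(n)}_{x,\ell} \leqs \hat a^{(n)}_\ell \df \ell/n$. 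Invoking Lemma~\ref{lem:LemUB-2} together with McLeish's inequality then yields
\[
\Var(\hat\eta_{n,2}^\omega) \leqs n^{-2}\sum_{\ell\leq n}\ell^{2-d/2+\epsilon} \leqs n^{1-d/2+\epsilon},
\]
which for $d\geq 3$ and $\epsilon < 1/6$ tends to zero as $n\to\infty$, but not summably.

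To upgrade this $\bbL_2$-convergence to an almost-sure statement, I extract a lacunary subsequence $N = N_n \cong n^{1+\delta}$ with $\delta>0$ chosen large enough so that $N^{1-d/2+\epsilon}$ is summable in $n$. Writing, for $0\leq r\leq R\cong n^\delta$,
\[
\hat\eta_{N+r,2}^\omega - \hat\eta_{N,2}^\omega = \sum_{\ell\leq N}\bigl(Z^{(N+r)}_\ell - Z^{(N)}_\ell\bigr) + \sum_{\ell=N+1}^{N+r} Z^{(N+r)}_\ell,
\]
I treat the second sum by McLeish's inequality combined with a union bound over $r\leq R$. For the first, the key input is the quadratic estimate $|\phi(\alpha_{N+r}) - \phi(\alpha_N)| \leqs r/N^2$ (coming from $|\alpha_{N+r} - \alpha_N| \leqs r/N^{3/2}$ and the fact that $\phi$ has a zero of order two at the origin); this propagates to $|a^{(N+r)}_{x,\ell}(y,m) - a^{(N)}_{x,\ell}(y,m)| \leqs (m+\ell)r/N^2$, so~\eqref{eq:abar} is satisfied for the differences with $\hat b^{(N,r)}_\ell \leqs \ell r/N^2$. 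A second application of Lemma~\ref{lem:LemUB-2} together with McLeish and a union bound over $r$ then produces a bound summable in $n$ provided $\delta$ is not too large.

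The main obstacle is to balance the two opposing constraints on $\delta$: large enough that the lacunary variance $N^{1-d/2+\epsilon}$ is summable, yet small enough that the slab-fluctuation bound of order $R^3/N^{1+d/2-\epsilon}$ also sums. Tracking exponents carefully, the feasibility region for $\delta$ is non-empty precisely when the standing assumptions $d\geq 3$ and $\epsilon < 1/6$ (equivalently $\rho < 1/12$) hold --- which is exactly why this restriction is built into Lemma~\ref{lem:WDBound}. All remaining technicalities, namely the handling of the sums over $y$ and $m$ with exponential tails and of the small-$\ell$ contributions, are routine and parallel the corresponding parts of the proof of Lemma~\ref{lem:2Term}.
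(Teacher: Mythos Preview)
Your overall strategy --- verify~\eqref{eq:abar} with $\hat a^{(n)}_\ell \leqs \ell/n$, apply Lemma~\ref{lem:LemUB-2} and~\eqref{eq:Maximal}, pass to a lacunary subsequence, then control the slab fluctuations --- is exactly the right one, and your variance bound $\Var(\hat\eta_{n,2}^\omega)\leqs n^{1-d/2+\epsilon}$ is correct. The gap is in the slab-fluctuation step, and it is fatal in the critical dimension $d=3$.

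With your decomposition
\[
\hat\eta_{N+r,2}^\omega - \hat\eta_{N,2}^\omega = \sum_{\ell\leq N}\bigl(Z^{(N+r)}_\ell - Z^{(N)}_\ell\bigr) + \sum_{\ell=N+1}^{N+r} Z^{(N+r)}_\ell ,
\]
the second sum still depends on $r$ through the superscript, so McLeish's inequality does not control its maximum over $r$; you are forced, as you say, into a union bound. For $d=3$ this yields $\bbE\bigl[\max_{r\leq R}(\,\cdot\,)^2\bigr]\leqs R^2/N^{3/2-\epsilon}$. Now track the exponents: with lacunary scale $N\cong n^{p}$ and $R\cong n^{p-1}$, summability of the lacunary variance requires $p(1/2-\epsilon)>1$, i.e.\ $p>2/(1-2\epsilon)>2$, whereas summability of $R^2/N^{3/2-\epsilon}$ requires $p(1/2+\epsilon)<1$, i.e.\ $p<2/(1+2\epsilon)<2$. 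These intervals are disjoint for every $\epsilon>0$, so no choice of lacunary exponent rescues the argument. Your stated claim that ``the feasibility region for $\delta$ is non-empty precisely when $d\geq 3$ and $\epsilon<1/6$'' is therefore incorrect for $d=3$.

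The paper repairs this by an algebraic rearrangement (its equation~\eqref{eq:ENk}): one rewrites the difference instead as
\[
\hat\eta_{N+r,2}^\omega - \hat\eta_{N,2}^\omega
= \sum_{\ell\leq N+r}\sum_x \tq{x,\ell}\sum_m\bigl({\rm e}^{(m+\ell)\phi(\alpha_{N+r})}-{\rm e}^{(m+\ell)\phi(\alpha_N)}\bigr)\bigl(\fxq{m}-\fa{m}\bigr)
+ \sum_{\ell=N+1}^{N+r} Z_\ell^{(N)} .
\]
Now the second sum carries the \emph{fixed} superscript $N$, so McLeish's maximal inequality~\eqref{eq:Maximal} applies directly to $\max_{r\leq R}\sum_{N+1}^{N+r}Z_\ell^{(N)}$ and gives $R/N^{3/2-\epsilon}$ rather than $R^2/N^{3/2-\epsilon}$. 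With this saving of one power of $R$, the paper takes $N\cong n^{2+\delta}$, $R\cong n^{1+\delta}$ with $\delta\in(0,1)$ satisfying $(2+\delta)(1/2-\epsilon)>1$; then both the second-sum bound $R/N^{3/2-\epsilon}\cong n^{-2-\delta/2+\epsilon(2+\delta)}$ and the first-sum bound $R^3/N^{5/2-\epsilon}\cong n^{-2+\delta/2+\epsilon(2+\delta)}$ are summable because $\epsilon<1/6$. The first sum is still handled by a union bound over $r$, exactly as you propose, with $\hat b^{(N,r)}_\ell\leqs \ell r/N^2$.
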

\begin{proof}[Proof of Lemma~\ref{lem:1Term}]
Recall that for in $a^{(n)}_{x, \ell }(y ,m )$ defined in \eqref{eq:alm-case2} the 
asymptotic bound \eqref{eq:abar} is satisfied with $\hat a^{(n)}_{{x}, \ell} \leqs  \ell/ n$
uniformly in $n$, $\ell\leq n$ and $x$.  
By~\eqref{eq:GUpper2} and~\eqref{eq:Maximal},
%\footnote{\color{red}Please, check that I'm not saying something stupid... When 
%$d>6+2\epsilon$, the series actually converges. I also corrected a typo 
%in~\eqref{eq:delta-eps}.\newline Dima: I had the following in mind: $\bbE\lb \sum_n
% \hat\eta_{n ,2}^\omega\rb^2 = \sum_n \Var \lb\hat \eta_{n ,2}^\omega\rb$ which by 
%\eqref{eq:VarEn1} converges (and such convergence, 
%I think, implies that $\lim_{n\to\infty}\hat\eta_{n ,2}^\omega =0$
%$\bbP$-a.s and in $\bbL_2$) if $d>5$ and $\epsilon <1/2$?}
\be
\label{eq:VarEn1}
 \Var \lb\hat \eta_{n ,2}^\omega\rb  \leqs \frac1{n^2} \sum_{\ell\leq n}
\frac{1}{\ell^{d/2 -2 -\epsilon}} \eqvs \frac{1}{n^{\frac{d}{2} - 1 -\epsilon} 
\wedge n^2}
%\cong \frac{1}{n^{1/2  -\epsilon}} \, 
,
\ee
which  already implies  the claim of Lemma~\ref{lem:1Term} in dimensions
$d\geq 5$. We shall continue discussion for the most difficult case of $d=3$.
%In this latter case we infer from~\eqref{eq:GUpper2} and~\eqref{eq:Maximal}:
%Therefore, by the first  of the inequalities in   \eqref{eq:delta-eps}, 
{ \eqref{eq:VarEn1} implies that
% for any $\delta >0$, 
} 
\be
\label{eq:En1Sub}
\bbE \bigl[\sum_{n}  \lb \hat \eta_{n^{2+\delta} ,2}^\omega \rb^2\bigr] < 
\infty\Rightarrow
\lim_{n\to\infty}\hat \eta_{n^{2+\delta} ,2}^\omega  = 0\ \ \bbP-{\rm a.s.}\
{\rm and\  in}\ \bbL_2 , 
\ee
{ 
whenever
\be
\label{eq:delta-eps}
(2+\delta)\lb \tfrac{1}{2}-\epsilon\rb >1 \quad\text{, that is,}\quad 
\delta>\frac{4\epsilon}{1-2\epsilon}.
\ee
Since $\epsilon < 1/6$, there are  choices of $\delta\in (0,1)$ which comply with 
\eqref{eq:delta-eps}. }
We need to control fluctuations of $\hat \eta_{N+r ,2}^\omega - \hat \eta_{N
,2}^\omega$ on the intervals of the form $[N, \dots ,N+R]$, where
\be
\label{eq:Intervals}
N \cong n^{2+\delta}\ \ {\rm and}\ \ R \cong (n+1)^{2+\delta} - n^{2+\delta}
\cong n^{1+\delta} .
\ee
Consider the following decomposition:
\be
\label{eq:ENk}
\begin{split}
\hat\eta_{N+r ,2}^\omega - \hat\eta_{N ,2}^\omega &=
{
\sum_{\ell \leq {N }} \sum_{x}\tq{x,\ell} \sum_m \lb {\rm e}^{(m+\ell )\phi (\alpha_{N+r})}
- {\rm e}^{(m+\ell )\phi (\alpha_{N})}\rb\lb \fxq{m } - \fa{m}\rb 
+\sum_{\ell = N+1}^{N+r} Z_\ell^{(N+r )}}\\
& =
\sum_{\ell \leq {N+r }} \sum_{x}\tq{x,\ell} \sum_m \lb {\rm e}^{(m+\ell )\phi (\alpha_{N+r})}
- {\rm e}^{(m+\ell )\phi (\alpha_{N})}\rb\lb \fxq{m } - \fa{m}\rb 
+\sum_{\ell = N+1}^{N+r} Z_\ell^{(N)} .
\end{split}
\ee
{The terms $Z_\ell^{(N)}$ in the second sum above were defined in \eqref{eq:eta2prime} and they 
 do not depend on $r$. 
%Note that the second term above has the ``mixingale'' form
%and 
By~\eqref{eq:Maximal}}, we are entitled to control its maximum on the
interval $[N, \dots , N+R]$:
\be
\label{eq:MaxEn1}
\begin{split}
\bbE \Bigl[ \max_{r\leq R} \bigl(
\sum_{\ell = N+1}^{N+r} Z_\ell^{(N)} \bigr)^2 \Bigr]
&\leqs  \frac1{N^2} \sum_{\ell =N+1}^{N+R}\frac{\ell^2}{\ell^{3/2 -\epsilon}}
\cong  \frac1{N^2}\bigl\{ (N+R)^{3/2 +\epsilon} - N^{3/2 +\epsilon}\bigr\}\\
&\cong \frac{R}{N^{3/2 - \epsilon}}
\cong
\frac{n^{1+\delta}}{n^{3(1+\delta/2 ) -\epsilon (2+\delta )}} 
\cong 
\frac{1}{n^{2 +\frac{\delta}{2} - \epsilon (2+\delta )}} 
\df a_n,
\end{split}
\ee
by our choice of parameters~\eqref{eq:Intervals}.

For each $r\leq R$ the first term in~\eqref{eq:ENk} corresponds to the following choice of
coefficients in the representation~\eqref{eq:Znl}: 
$a^{(N+r)}_{x , \ell} (y , m ) = \lb {\rm e}^{(m+\ell )\phi (\alpha_{N+r})}
- {\rm e}^{(m+\ell )\phi (\alpha_{N})}\rb $. Thus, \eqref{eq:abar} is satisfied
with ${\hat a^{(N+r)}_{{x},\ell}\leqs \hat a^{(N+r)}_{\ell} \df \ell r/N^2 }$ 
{and any $\nu <c_2$}. 
By the very same~\eqref{eq:GUpper2} and~\eqref{eq:Maximal}, we infer that, for
any $r\leq R$,
\be
\label{eq:Var1d1}
\Var \Bigl(
\sum_{\ell = 1}^{{N +r}}
\sum_{x}\tq{x,\ell} \sum_m \lb {\rm e}^{(m+\ell )\phi (\alpha_{N+r})}
- {\rm e}^{(m+\ell )\phi (\alpha_{N})}\rb \Bigr)
{ 
\leqs \frac{r^2}{N^4}\sum_{\ell = 1}^{{N +r}} \ell^{\frac{1}{2} +\epsilon}
}
\leqs \frac{R^2}{N^{5/2 -\epsilon}}.
\ee
Hence, by the union bound and our choice of parameters~\eqref{eq:Intervals},
\be
\label{eq:MaxEn12}
\begin{split}
\bbE \Bigl[ \max_{r\leq R} 
&\Bigl(
\sum_{\ell =1}^{{N+r}}
\sum_{x}\tq{x,\ell} \sum_m \bigl( {\rm e}^{(m+\ell )\phi (\alpha_{{N+r}})}
- {\rm e}^{(m+\ell )\phi (\alpha_{N})}\bigr) \Bigr)^2\,\Bigr]
\\
&\qquad \leqs
\frac{R^3}{N^{5/2 -\epsilon} } \cong
\frac{1}{n^{2 -\frac{\delta}{2} -\epsilon (2+\delta )}} 
\df b_n .
\end{split}
\ee
%Since, by both inequalities in~\eqref{eq:delta-eps}, $\sum_n (a_n +b_n )
%<\infty$, 
{Since $\epsilon < 1/6$, the inequality $\frac{\delta}{2} + (2+\delta )\epsilon <1$ 
holds for any choice of 
$\delta \leq 1$. } Therefore, any such choice ensures that $\sum_n (a_n +b_n )
<\infty$, which implies that 
\[
 \lim_{n\to\infty} \max_{n^{2+\delta}\leq r < (n+1)^{2+\delta}}
\abs{\hat\eta_{r, 1}^\omega - \hat\eta_{n^{2+\delta}, 1}^\omega } = 0 ,
\]
$\bbP$-a.s and in $\bbL_2$. 
The proof of Lemma~\ref{lem:1Term}  is completed.
\end{proof}

\paragraph{The $\eta_{n,3}^\omega$ term.}
{This is the most difficult term, and, at this stage,  we need to
 rely on Lemma~\ref{lem:LemUB-1}
rather than on Lemma~\ref{lem:LemUB-2}.}
Recall that
\[
 \frac{\eta_{n,3}^\omega}{\aS{n} (\alpha_n ) }
=\sum_{\ell\leq n}\sum_x \tq{x, \ell}\lb {\rm e}^{i (x - \ell v )\cdot\alpha_n} - 1\rb
\sum_{m\leq n-\ell} \lb \fxq{m} - \fa{m}\rb .
\]
By Lemma~\ref{lem:largem}, we may remove the constraint $m\leq n-\ell$.
Define, therefore,
\be
\label{eq:Znl3}
 Z^{(n)}_\ell = \sum_x \tq{x, \ell}\lb {\rm e}^{i (x - \ell v )\cdot\alpha_n} - 1\rb
\lb \fxq{\, } - 1\rb\quad
\text{and}\quad
\hat\eta_{n ,3}^\omega = \sum_{\ell\leq n}Z^{(n)}_\ell .
\ee
We need to prove:
\begin{lemma}
\label{lem:3Term} In the very weak disorder regime,
\be
\label{eq:3Term}
\lim_{n\to\infty} \hat\eta_{n, 3}^\omega  = 0 ,
\ee
$\bbP$-a.s.\ and in $\bbL_2$  for each $\alpha\in \bbR^d$ fixed.
\end{lemma}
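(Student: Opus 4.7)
My plan is to follow the general strategy of Lemmas~\ref{lem:2Term} and~\ref{lem:1Term}, with one crucial modification. The distinguishing feature of $\hat\eta^\omega_{n,3}$ is that its coefficient $a^{(n)}_{x,\ell}(y,m) = {\rm e}^{i(x-\ell v)\cdot\alpha_n} - 1$ depends on $x$ in an essential way: it is bounded only by $|\alpha|\,|x-\ell v|/\sqrt{n}$, not by a quantity independent of $x$. For this reason the $x$-uniform Lemma~\ref{lem:LemUB-2} is too lossy, and I would invoke instead the $x$-sensitive Lemma~\ref{lem:LemUB-1} with the choice $\hat a^{(n)}_{x,\ell} \leqs |x-\ell v|/\sqrt{n}$ in~\eqref{eq:abar} (for any $\nu < c_2$).

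Plugging this choice into~\eqref{eq:GUpper1}--\eqref{eq:GUpper1-plus}, the extra factor $|x-\ell v|^2$ inside the Gaussian can be absorbed via $|x-\ell v|^2\, {\rm e}^{-c_2|x-\ell v|^2/\ell} \leqs \ell\cdot {\rm e}^{-c_2|x-\ell v|^2/(2\ell)}$, after which the sum over $x$ is treated exactly as in~\eqref{eq:VarXlm}. This would yield
\[
\bbE\bigl[\bigl(\bbE(Z^{(n)}_\ell\,|\,\calA_{\ell-k})\bigr)^2\bigr],\;
\bbE\bigl[\bigl(Z^{(n)}_\ell - \bbE(Z^{(n)}_\ell\,|\,\calA_{\ell+k})\bigr)^2\bigr]
\leqs \frac{1}{n\, \ell^{d/2 - 1 - \epsilon}\, (1+k)^{1+\epsilon}} ,
\]
whence McLeish's inequality~\eqref{eq:Maximal} would give $\Var(\hat\eta^\omega_{n,3}) \leqs \frac{1}{n}\sum_{\ell \leq n} \ell^{-d/2+1+\epsilon}$. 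For $d \geq 5$ this is summable in $n$ and the conclusion follows directly by Borel--Cantelli; in the most difficult case $d=3$ the bound reduces only to $\Var(\hat\eta^\omega_{n,3}) \leqs n^{-(1/2 - \epsilon)}$.

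To close the argument when $d=3$, I would mimic the lacunary scheme of Lemma~\ref{lem:1Term}: set $N = N_n \cong n^{2+\delta}$ and $R = R_n \cong n^{1+\delta}$ with $\delta$ satisfying~\eqref{eq:delta-eps}, which is feasible because $\epsilon < 1/6$. Convergence of $\hat\eta^\omega_{N_n,3}$ to $0$ then follows from Borel--Cantelli. Fluctuations on a block $[N, N+R]$ would be handled via the same decomposition as in~\eqref{eq:ENk}: the short-block sum $\sum_{\ell=N+1}^{N+r} Z^{(N+r)}_\ell$ is bounded by~\eqref{eq:Maximal}, while for the long-block sum I would exploit the Lipschitz-in-$\alpha$ estimate $|{\rm e}^{i(x-\ell v)\cdot\alpha_{N+r}} - {\rm e}^{i(x-\ell v)\cdot\alpha_N}| \leqs R|x-\ell v|/N^{3/2}$, which verifies~\eqref{eq:abar} with $\hat b^{(N,r)}_{x,\ell} \leqs R|x-\ell v|/N^{3/2}$; the same absorption trick, combined with the union bound, then produces a maximal estimate that is summable in $n$. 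The hardest part will be the exponent-bookkeeping in this lacunary scheme: one must arrange simultaneous summability of both the block-endpoint variance along $\lbr N_n\rbr$ and the maximal-fluctuation bound on each block with a single choice of $\delta$, and the constraint $\rho < 1/12$ built into~\eqref{eq:WDBound} is precisely what provides the cushion for this balance to close, exactly as in the proof of Lemma~\ref{lem:1Term}.
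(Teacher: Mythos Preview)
Your strategy is correct and matches the paper's: invoke the $x$-sensitive Lemma~\ref{lem:LemUB-1}, obtain $\psi_\ell^2 \cong (n\ell^{1/2-\epsilon})^{-1}$ in the critical case $d=3$, pass to the lacunary sequence $N_n\cong n^{2+\delta}$ via~\eqref{eq:delta-eps}, and control block fluctuations through the decomposition~\eqref{eq:ENk}. Your absorption step $|x-\ell v|^2 e^{-c_2|x-\ell v|^2/\ell}\leqs \ell\, e^{-c_2|x-\ell v|^2/(2\ell)}$ is a clean shortcut; the paper instead keeps the truncation $\hat a^{(n)}_{x,\ell}\cong \tfrac{|x-\ell v|}{\sqrt n}\wedge 1$ and performs a two-case integral analysis (\eqref{eq:En3Case1}--\eqref{eq:En3Case2}) according to whether $k|v|/2\lessgtr\sqrt n$, arriving at the same $\psi_\ell^2$.

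One point to tighten: for the short-block sum you write $\sum_{\ell=N+1}^{N+r} Z_\ell^{(N+r)}$ and say it is ``bounded by~\eqref{eq:Maximal}''. Since these summands depend on $r$, McLeish's inequality does not directly control the maximum over $r$, and a union bound here yields $\tfrac{R^2}{N^{3/2-\epsilon}}$, which is \emph{not} summable along~\eqref{eq:Intervals}. The fix --- and what the paper actually does in the second line of~\eqref{eq:ENk} and in~\eqref{eq:eta3-decomp} --- is to rewrite the decomposition so that the short-block sum is $\sum_{\ell=N+1}^{N+r} Z_\ell^{(N)}$ (with the long-block sum then running up to $N+r$); now the summands are independent of $r$, \eqref{eq:Maximal} applies directly to $\max_{r\le R}$, and one gets the summable bound $\tfrac{R}{N^{3/2-\epsilon}}$. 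With this correction your argument goes through exactly as in the paper.
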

\begin{proof}[Proof of Lemma~\ref{lem:3Term}]
For $Z^{(n)}_\ell$ defined in~\eqref{eq:Znl3}, the bound~\eqref{eq:abar} is
satisfied with 
\be 
\label{eq:an-for3}
\hat a^{(n)}_{x , \ell} = \abs{{\rm e}^{i (x - \ell v )\cdot\alpha_n} - 1}
\cong 
{
\frac{\abs{x-\ell 
v}}{\sqrt{n}}
}
%\frac{\abs{x-\ell v}^2}{n}
\wedge 1,
\ee 
{for any $\nu <c_2$}. 
Applying~\eqref{eq:GUpper1}, we infer that
%\footnote{\color{red}There was a typo, I think. I changed $\abs{{\rm 
%e}^{i (x - \ell v )\cdot\alpha_n} - 1} \cong \frac{\abs{x-\ell v}^2}{n}\wedge 
%1$ to $\abs{{\rm e}^{i (x - \ell v )\cdot\alpha_n} - 1} \cong \frac{\abs{x-\ell 
%v}}{\sqrt{n}}\wedge 1$. This does not change~\eqref{eq:Condl-mY1}, since 
%in~\eqref{eq:GUpper1} the factor is $(\hat a^{(n)}_{x , \ell})^2$. I made the 
%same change on top of next page.}
\be
\label{eq:Condl-mY1}
%\begin{split}
 \bbE\bigl[\bigl( \bbE(Z_\ell^{(n)} \,|\, \calA_{\ell-k}) \bigr)^2\,\bigr]
%&\leqs
%\frac1{\ell^{d+1 - \epsilon/2} }
%\sum_{x\in\cHm{\ell -k}} {\rm e}^{- c_2 \abs{x - \ell v}^2/\ell}
%\abs{{\rm e}^{(x -\ell v)\cdot i\alpha/\sqrt{n}} -1}^2\\
%&\ \
%\cong
\leqs 
\frac1{\ell^{d+1 - \epsilon/2}}
\sum_{x\in\cHm{\ell -k}} {\rm e}^{- c_2 \abs{x - \ell v}^2/\ell}\,
\bigl(
\frac{\abs{x-\ell v}^2}{n}\wedge 1\bigr) .
%\end{split}
\ee
As in the derivation of~\eqref{eq:GUpper2}, we may assume that $k$ is
sufficiently large, so that, in particular, $\abs{x-\ell v}\geq 
\frac{k\abs{v}}{2}$ for all $x\in \cHm{\ell -k}$.  In the latter case, the sum 
on the right-hand side
of~\eqref{eq:Condl-mY1} is bounded above by
\be
\label{eq:Condl-mY}
\begin{split}
&\leqs  
\int_{\abs{y} >\frac{k\abs{v}}{2}} {\rm e}^{-c_2 \abs{y}^2/\ell}\,
\Bigl(\frac{\abs{y}^2}{n}\wedge 1\Bigr) \dd y
= 
\int_{\frac{k\abs{v}}{2}}^\infty r^d {\rm e}^{-c_2 r^2/\ell}\,
\Bigl( \frac{r^2}{n}\wedge 1\Bigr) \dd r \\
&\cong
{\ell^{(d+1)/2}}
 \int_{\frac{k\abs{v}}{2\sqrt{\ell}}} ^\infty
t^d {\rm e}^{-c_2 t^2}\, \Bigl( \frac{t^2\ell}{n}\wedge 1\Bigr) \dd t
 \df
{\ell^{(d+1)/2}} {I_n (\ell ,k)} .
\end{split}
\ee
We shall repeatedly rely on~\eqref{eq:m-lbound}. There are two cases to
consider:

\noindent
\case{1}  If $\frac{k\abs{v}}{2}\leq \sqrt{n}$, then
\be
\label{eq:En3Case1}
\begin{split}
I_n (\ell ,k)  &= \frac{\ell}{n} \int_{\frac{k\abs{v}}{2\sqrt{\ell}}}^{\sqrt{n/\ell}}
t^{d+2} {\rm e}^{-c_2 t^2}\dd t  + \int_{\sqrt{n/\ell}}^\infty  t^d {\rm e}^{-c_2 t^2}\dd t
\leqs \frac{\ell}{n} {\rm e}^{-c_2
\frac{ (k{\abs{v}})^2}{ 8 \ell}} + {\rm e}^{-c_2 \frac{n}{2\ell}} \\
&
\leqs   \frac{\ell}{n}{\rm e}^{-c_2
\frac{ (k{\abs{v}})^2}{ 8 \ell}} \leqs 
\frac{1}{n}\cdot  \frac{\ell^{3/2 +\epsilon/2}}{(1+k
)^{1+\epsilon}}.
\end{split}
\ee
%{
%where, for the second term, we have relied on a trivial bound:  
%${\rm e}^{-c_2 r}\leqs r^{-(3+\epsilon )}$ holds uniformly in $r>0$.
%}

\noindent
\case{2} If $\frac{k\abs{v}}{2} > \sqrt{n}$, then
\be
\label{eq:En3Case2}
I_n (\ell ,k)  =
\int_{\frac{k\abs{v}}{2\sqrt{\ell}}}^\infty  t^d {\rm e}^{-c_2 t^2}\dd t \leqs
{\rm e}^{-c_2 \frac{(k{\abs{v}})^2}{8\ell}}
\leqs \frac{\ell}{n}{\rm e}^{-c_2 \frac{(k{\abs{v}})^2}{16\ell}}
\leqs  \frac{1}{n}\cdot \frac{\ell^{3/2 +\epsilon/2}}{(1+k )^{1+\epsilon}}
\ee
as well.

\noindent
As a result (again we restrict attention to the most difficult case $d=3$):
\be
\label{eq:EYlnBound}
\bbE\bigl[\bigl( \bbE(Z_\ell^{(n)} \,|\, \calA_{\ell-k}) \bigr)^2 \bigr] \leqs
\frac{1}{(1+k )^{1+\epsilon}} \cdot \frac{1}{n}\cdot\frac{1}{ \ell^{1/2 
-\epsilon }}.
\ee
{ 
Let us turn to the bound~\eqref{eq:GUpper1-plus}
on $\bbE\bigl[\bigl( Z_\ell^{(n)} - \bbE(Z_\ell^{(n)} \,|\, 
\calA_{\ell+k})\bigr)^2\bigr]$.
As before, we apply it with $\hat a^{(n)}_{x,\ell} = \frac{\abs{x-\ell 
v}}{\sqrt{n}} \wedge 1$.
We need to estimate
\be 
\frac1{\ell^{d+1 - \epsilon/2}}
\sum_{x\in\cHm{\ell +k}} {\rm e}^{- c_2 \abs{x - \ell v}^2/\ell - \nu {\rm d}_{\ell +k }(x)}
\bigl(
\frac{\abs{x-\ell v}^2}{n}\wedge 1\bigr).
\ee
Proceeding as in \eqref{eq:extra-term} and noting that $\frac{k^2}{n}{\rm 
e}^{-\nu^\prime k} \leqs 
\frac{1}{n}{\rm e}^{-\nu^\prime k/2}$, we infer that the upper 
bound~\eqref{eq:EYlnBound} holds for  
$\bbE\bigl[\bigl( Z_\ell^{(n)} - \bbE(Z_\ell^{(n)} \,|\, 
\calA_{\ell+k})\bigr)^2\bigr]$ as well. \newline
}%
In particular, by~\eqref{eq:Maximal},  $\Var \lb \hat\eta^\omega_{n, 3}\rb \leqs
n^{-1/2 +\epsilon}$
and, as in the case
of $\hat\eta^\omega_{n , 2}$, we infer that there is $\bbP$-a.s.\
and $\bbL_2$ convergence to zero along lacunary
subsequences  $\bigl\{ n^{2+\delta } \bigr\}$, {whenever
$\delta$ satisfies \eqref{eq:delta-eps}}.
Hence, again as in the case of $\hat\eta^\omega_{n, 2}$,  we need
to control the fluctuations $\hat\eta^\omega_{N+r, 3} - \hat\eta^\omega_{N, 3}$ 
over intervals of the form~\eqref{eq:Intervals}. {As in~\eqref{eq:ENk}}, 
we make use of the decomposition
\be 
\label{eq:eta3-decomp}  
\begin{split}
\hat\eta^\omega_{N+r, 3} - \hat\eta^\omega_{N, 3}
  &=
\sum_{\ell =1}^{{N+r}}
\sum_x \tq{x ,\ell}
\lb {\rm e}^{i (x -\ell v)\cdot \alpha_{N+r}} -
{\rm e}^{i (x -\ell v)\cdot\alpha_{N}}
\rb
\lb \fxq{\, } - 1 \rb
+
\sum_{\ell = N+1}^{N+{r}} Z_\ell^{(N)} \\ 
& \df \sum_{\ell =1}^{{N+r}} Z_\ell^{(N,r)} + \sum_{\ell = N+1}^{N+{r}} Z_\ell^{(N)}
\end{split}
\ee
{
We continue to work with $d=3$. For each $r=1, \dots , R$ fixed the 
bound \eqref{eq:abar} is satisfied with 
\be 
\label{eq:an-for3-diff}
\abs{ {\rm e}^{i (x -\ell v)\cdot \alpha_{N+r}} -
{\rm e}^{i (x -\ell v)\cdot\alpha_{N}}}  \leqs \frac{R\abs{x -\ell v}}{N^{3/2}}\wedge 1
\df \hat a^{(N)}_{x, \ell} .
\ee
The expression  for  $\hat a^{(N)}_{x, \ell}$ in \eqref{eq:an-for3-diff} is similar to 
\eqref{eq:an-for3} with $1/\sqrt{n}$ being replaced by the higher order term 
$R/N^{3/2}$.  Literally repeating the derivation of \eqref{eq:EYlnBound} we infer 
that for each $r=1, \dots , R$ fixed random variables $Z_\ell^{(N,r)}$ in 
\eqref{eq:eta3-decomp} satisfy:
\be
\label{eq:EYlnBound-R}
\bbE\bigl[\bigl( \bbE(Z_\ell^{(N,r)} \,|\, \calA_{\ell-k}) \bigr)^2 \bigr] , 
\bbE\bigl[\bigl(Z_\ell^{(N,r)} - \bbE(Z_\ell^{(N,r)} \,|\, \calA_{\ell+k}) \bigr)^2 \bigr] 
\leqs
\frac{1}{(1+k )^{1+\epsilon}} \cdot \frac{R^2}{N^3}\cdot\frac{1}{ \ell^{1/2 
-\epsilon }}. 
\ee
Applying \eqref{eq:Maximal} we conclude:}
For $N$ and $R$ in the range~\eqref{eq:Intervals}, and for any $r=1, \dots, R$ fixed, 
 the variance of the first
term on the right hand side of \eqref{eq:eta3-decomp} is  uniformly bounded above ($d=3$) by
\[
\leqs \frac{R^2}{N^{3}} \cdot N^{1/2 +\epsilon}  =
\frac{R^2}{N^{5/2 -\epsilon}} .
\]
As in the case of~\eqref{eq:MaxEn12}, the union bound suffices. 

Finally, using~\eqref{eq:EYlnBound} as an input for~\eqref{eq:Maximal}, we 
conclude that
\[
\bbE\bigl[\bigl(\max_{r\leq R} \sum_{\ell = N+1}^{N+{r}} 
Z_\ell^{(N)}\bigr)^2\bigr]
\leqs
\frac1{N}\sum_{\ell = N+1}^{N+R}\frac{1}{\ell^{1/2 -\epsilon}}
\cong
\frac{\lb N+R\rb^{1/2 +\epsilon} - N^{1/2 +\epsilon}}{N}
\cong \frac{R}{N^{3/2 -\epsilon}} .
\]
%As in the case of~\eqref{eq:MaxEn1},
% the choice of
%parameters ensures that,
%in the range~\eqref{eq:Intervals}, 
{
The right-most term above is
summable in $N$ for any choice of $\delta\in (0,1)$ in \eqref{eq:Intervals}. 
In particular, it is summable if $\delta$ complies with \eqref{eq:delta-eps}.
}%
Consequently, for such choices of $\delta$,
\[
\lim_{n\to\infty} \max_{n^{2+\delta}\leq r < (n+1)^{2+\delta}}
\abs{\hat\eta_{r , 3}^\omega - \hat\eta_{n^{2+\delta}, 3}^\omega} = 0 ,
\]
$\bbP$-a.s.\ and in $\bbL_2$,  and we are home.
\end{proof}
\paragraph{{Cross-terms in~\eqref{eq:S2}.}} 
{
Since our treatment of the correction terms
$\eta_{n,i}^\omega$ was based either on the estimates~\eqref{eq:abar} on the 
absolute value of the coefficients $a_{x , \ell}^{(n)} (y ,m )$ which appear 
in~\eqref{eq:eta1-form}, \eqref{eq:eta2prime} and~\eqref{eq:Znl3}, or on 
estimates on the absolute value of differences 
$a_{x , \ell}^{(N+r)} (y ,m ) - a_{x , \ell}^{(N)} (y ,m )$ between these coefficients, which show 
up in the decompositions~\eqref{eq:eta2-2terms}, \eqref{eq:ENk} and, 
respectively, \eqref{eq:eta3-decomp}, we readily infer 
that~\eqref{eq:CortermsCLT} of Theorem~\ref{thm:QuCLT} carry over to cross-terms
in~\eqref{eq:S2}. For instance, let us consider the (2-3) cross term
\[
\aS{n} (\alpha_n  )
\sum_{\ell\leq n}\sum_{x, m}\tq{x,\ell} \lb {\rm e}^{(m+\ell )\phi (\alpha_n )} 
- 1\rb\lb 
{\rm e}^{i (x- \ell v)\cdot\alpha_n } -1\rb \lb \fxq{m } - \fa{m}\rb \df 
\sum_{\ell\leq n} Z^{(n)}_\ell.
\]
In terms of \eqref{eq:Znl}, we are working with coefficients
\[
a^{(n)}_{x , \ell} (y ,m ) =a^{(n)}_{x , \ell} (m ) =
\lb {\rm e}^{(m+\ell)\phi (\alpha_n )} - 1\rb\lb 
{\rm e}^{i (x- \ell v)\cdot\alpha_n } -1\rb .
\]
In particular, \eqref{eq:VarEn1} carries over, and we infer convergence to zero 
along the lacunary sequence $n^{2+\delta}$. In order to study the fluctuations 
of $\sum_{\ell\leq n}Z^{(n)}_\ell$ on the intervals $[N, \dots , N+R]$, one, as 
was done in~\eqref{eq:ENk}, employs the decomposition
\be 
\label{eq:cross2-3}
\begin{split}
\sum_1^{N+r} &Z^{(N+r )}_\ell - \sum_1^{N} Z^{(N )}_\ell \\
&\quad  = 
\sum_{\ell \leq {N+r }} \sum_{x,m}\tq{x,\ell} 
\bigl( 
%{\rm e}^{(m+\ell )\phi (\alpha_{N+r})}
%- {\rm e}^{(m+\ell )\phi (\alpha_{N})} 
a^{(N+r)}_{x , \ell} (m ) - a^{(N)}_{x , \ell} (m )
\bigr)
\lb \fxq{m} - \fa{m}
\rb 
+\sum_{\ell = N+1}^{N+r} Z_\ell^{(N )}
\end{split}
\ee
Since~\eqref{eq:VarEn1} holds,  
the second term on the right-hand side of \eqref{eq:cross2-3} is worked out 
exactly as in~\eqref{eq:MaxEn12}. On the other hand,
\be
\label{eq:a-23-split}
\begin{split}
 a^{(N+r)}_{x , \ell} (m ) - a^{(N)}_{x , \ell} (m ) &= 
\lb {\rm e}^{(m+\ell )\phi (\alpha_{N+r} )} - {\rm e}^{(m+\ell )\phi (\alpha_{N} )}\rb
\lb 
{\rm e}^{i (x- \ell v)\cdot\alpha_{N+r }} -1\rb \\
&\quad + 
\lb 1-{\rm e}^{(m+\ell)\phi (\alpha_{N})}\rb
\lb{\rm e}^{i(x-\ell v)\cdot\alpha_{N}}-
{\rm e}^{i (x- \ell v)\cdot\alpha_{N+r }}\rb
%{\rm e}^{(m+\ell )\phi (\alpha_{N} )}
%\lb {\rm e}^{i (x- \ell v)\cdot\alpha_{N }}
%-
%{\rm e}^{i (x- \ell v)\cdot\alpha_{N+r }}\rb .
\end{split}
\end{equation}
In view of Remark~\ref{rem:tn}, we restrict attention to $\ell\leq N+R$, which 
implies that
$\abs{\ell \phi (\alpha_{N} )}\leqs 1$ uniformly in all the situations in question. Hence both terms
above can be worked out exactly as in the cases of, respectively, the 
corrections $\eta_{n,2}^\omega$ and $\eta_{n, 3}^\omega$. 
}
\section{Proof of the $\bbL_2$ estimate~\eqref{eq:WDBound}}
\label{sec:WDBound}

A variant of our target estimate~\eqref{eq:WDBound} was proved
in~\cite[Proposition~3.1]{IV-Crossing} 
and we shall follow a similar line of reasoning
and, eventually, rely on upper bounds derived in the latter paper.

\subsection{Preliminaries}
For $u,v\in\Zdo$ and $m\in\bbN$, we set
\[
\tq{u,v,m} \df t^{\theta_u\omega}_{v-u,m},\quad \fq{u,v,m} \df f^{\theta_u\omega}_{v-u,m},\quad \ta{u,v,m}
\df \bbE(\tq{u,v,m}),\quad \fa{u,v,m} \df \bbE(\fq{u,v,m}),
\]
and
\[
D(u,v) \df ( u +\calY^h ) \cap ( v - \calY^h ) \, \cap \, \bbZ^{d+1} . 
\]
Moreover, we write $\calT(u,v;n)$ for the set of all cone-confined
paths $\gamma\subseteq D(u,v)$ of length $n$ leading from $u$ to $v$, and $\calF
(u,v;n)$ for the corresponding subset of irreducible paths.

\medskip
Observe first that, by definition, $\fq{u,v,m}$ is
$\sigma\setof{V(x)}{x\in D(u,v)}$-measurable. In particular, if
$D(x,y)\cap D(\xpr,\ypr) = \emptyset$, then
\begin{equation}
\label{eq:L2factorize}
\bbE \bigl[ \tq{x,\ell}\, \tq{\xpr,\ell}\, \bbE (\fq{x,y,m}-\fa{x,y,m}\given\calA)\, \bbE
(\fq{\xpr,\ypr,\mpr}-\fa{\xpr,\ypr,\mpr}\given\calA) \bigr] = 0.
\end{equation}
Indeed, in that case, either $D(x,y)\cap (\xpr-\calY^h) =\emptyset$, or $D(\xpr,\ypr)\cap (x-\calY^h) =\emptyset$. For
definiteness, let us assume the latter. We can then conclude that the random variable $\fq{\xpr,\ypr,\mpr}$ is
independent of $\tq{x,\ell}\, \tq{\xpr,\ell}\, \bbE (\fq{x,y,m}-\fa{x,y,m}\given\calA)$. The same is
thus also true of $\bbE (\fq{\xpr,\ypr,\mpr}-\fa{\xpr,\ypr,\mpr}\given\calA)$, and the claim follows, since the
latter has
mean zero.

\medskip
A second observation is that, for any $A\subseteq \bbZ^{d+1}$ and the
corresponding cylindrical $\sigma$-algebra $\calA = \sigma\setof{V(z)}{z\in A}$,
\be  
\label{eq:observation}
\bbE \bigl[ \tq{x,\ell}\, \tq{\xpr,\ell}\, \bbE (\fq{x,y,m}\given\calA)\, \bbE
(\fq{\xpr,\ypr,\mpr}\given\calA) \bigr] \leq \bbE \bigl[ \tq{x,\ell}\, \tq{\xpr,\ell}\,
\fq{x,y,m}\, \fq{\xpr,\ypr,\mpr} \bigr].
\ee
Indeed, define $g = (\lambda+\log(2d+2))(2\ell+m+\mpr)-h\cdot(y+\ypr)$ and 
let {$\Sigma^*$ be the sum over all the paths 
$\gamma\in\calT(0,x;\ell), \eta\in\calF (x,y;m), \gamma'\in\calT(0,\xpr;\ell)$ and 
$\eta'\in\calF (\xpr,\ypr;\mpr)$. }
Then the attractivity property~\eqref{eq:attractive} implies that
\begin{align*}
&{\rm e}^{g}\, 
\bbE \bigl[ \tq{x,\ell}\, \tq{\xpr,\ell}\, \bbE
(\fq{x,y,m}\given\calA)\, \bbE
(\fq{\xpr,\ypr,\mpr}\given\calA) \bigr]\\
&
\qquad=
{\sum}^*
\prod_{u\in A}
{\rm 
e}^{-\phi_\beta(\ell_{\gamma}(u)+\ell_{\gamma'}(u)+\ell_{\eta}(u)+\ell_{\eta'}
(u))}\,
\prod_{v\not\in A}{\rm 
e}^{-\phi_\beta(\ell_{\eta}(v))-\phi_\beta(\ell_{\eta'}(v)) 
- \phi_\beta ( \ell_{\gamma}(v)+\ell_{\gamma'}(v ))
}\\
&\qquad\leq
{\sum}^*
\prod_{u\in\Zdo}
{\rm 
e}^{-\phi_\beta(\ell_{\gamma}(u)+\ell_{\gamma'}(u)+\ell_{\eta}(u)+\ell_{\eta'}
(u))}
= {\rm e}^{g}\,  
\bbE \bigl[ \tq{x,\ell}\, \tq{\xpr,\ell}\,
\fq{x,y,m}\, \fq{\xpr,\ypr,\mpr} \bigr] .
\end{align*}

\smallskip
Note that~\eqref{eq:observation} implies, in particular, that
\begin{multline}
\label{eq:L2cleaner}
\bigl| \bbE \bigl[ \tq{x,\ell}\, \tq{\xpr,\ell}\, \bbE (\fq{x,y,m}-\fa{x,y,m}\given\calA)\, \bbE
(\fq{\xpr,\ypr,\mpr}-\fa{\xpr,\ypr,\mpr}\given\calA) \bigr] \bigr|\\
\leq 2\, \bbE \bigl[ \tq{x,\ell}\, \tq{\xpr,\ell}\,
\fq{x,y,m}\, \fq{\xpr,\ypr,\mpr} \bigr]\, \One_{\{D(x,y)\cap
D(\xpr,\ypr)\neq\emptyset\}}.
\end{multline}

\subsection{Getting rid of the last irreducible steps}

For several paths $\gamma_1, \dots , \gamma_k$, define
\[
 \Phi_\beta (\gamma_1, \dots ,\gamma_k ) = 
\sum_{u\in\Zdo} \phi_\beta \Bigl( \sum_1^k \ell_{\gamma_{i}} (u ) \Bigr) .
\]
Applying~\eqref{eq:attractive} once more, we see that
\be 
\label{eq:PhiUpper}
\lbr  \Phi_\beta(\gamma,\gamma',\eta,\eta')  - 
\Phi_\beta(\gamma,\gamma') \rbr +\lbr  (m +\mpr )\phi_\beta(1) -  \Phi_\beta
(\eta ) - \Phi_\beta (\eta' )\rbr 
\geq 0 ,
\ee
uniformly in all paths $\gamma\in\calT(0,x;\ell)$,
$\gamma'\in\calT(0,\xpr;\ell)$, $\eta\in\calF (x,y;m)$
and $\eta'\in\calF (\xpr,\ypr;\mpr)$. This implies that
\[
\sum_{\substack{\eta\in\calF (x,y;m),\\\eta'\in\calF (\xpr,\ypr;\mpr)}} {\rm 
e}^{-\Phi_\beta(\gamma,\gamma',\eta,\eta')} \leq
{\rm e}^{\phi_\beta(1)(m+\mpr)}\,{\rm e}^{-\Phi_\beta(\gamma,\gamma')}
\sum_{\substack{\eta\in\calF (x,y;m),\\\eta'\in\calF (\xpr,\ypr;\mpr)}} {\rm 
e}^{-\Phi_\beta(\eta)-\Phi_\beta(\eta')}.
\]
Since $\lim_{\beta\to 0} \phi_\beta(1) = -\log(1-p_\infty)$, it follows
from~\eqref{eq:L2cleaner} and~\eqref{eq:exp-tails} that, in the very weak
disorder regime,
\begin{align*}
\bigl| \bbE \bigl[ \tq{x,\ell}\, \tq{\xpr,\ell}\,
\bbE (\fq{x,y,m}-\fa{x,y,m}&\given\calA)\,
\bbE(\fq{\xpr,\ypr,\mpr}-\fa{\xpr,\ypr,\mpr}\given\calA) \bigr]\bigr|\\
&\leq
2 {\rm e}^{\phi_\beta(1)(m+\mpr)}\,
\bbE \bigl[ \tq{x,\ell}\, \tq{\xpr,\ell} \bigr]\,
\One_{\{D(x,y)\cap D(\xpr,\ypr)\neq\emptyset\}}\,
\fa{x,y,m}\fa{\xpr,\ypr,\mpr}\\
&\leq
2{\rm e}^{-(\nu-\phi_\beta(1))(m+\mpr)}\,
\One_{\{D(x,y)\cap D(\xpr,\ypr)\neq\emptyset\}}\,
\bbE\bigl[ \tq{x,\ell}\, \tq{\xpr,\ell} \bigr]\\
&\leq
2{\rm e}^{-(\nu/4)(m+\mpr) - (\nu/4)(\abs{x-\xpr})}\,
\bbE\bigl[ \tq{x,\ell}\, \tq{\xpr,\ell} \bigr].
\end{align*}
Indeed, in order for the event $D(x,y)\cap D(\xpr,\ypr)\neq\emptyset$ to occur, it is necessary that $\abs{x-\xpr}\leq
m\vee \mpr \leq m+\mpr$.

\subsection{Weakly interacting random walks}
There remains to prove that
\begin{equation}
\label{eq:L2TargetEstimate}
\bbE \bigl[\tq{x,\ell}\tq{\xpr,\ell}\bigr]
\leq
\frac{c_1}{\ell^{d+1 -\rho }} \exp\Bigl\{ - c_2\frac{\abs{x -\ell v}^2}{\ell}
- c_2\frac{\abs{\xpr -\ell v}^2}{\ell}\Bigr\}.
\end{equation}
Let us denote by $\RWP$ the law of the random walk on $\Zdo\times\bbN$, whose
increments have law $(\fa{x,n})_{x\in\Zdo,n\in\bbN}$. We shall denote its
(random) trajectory by the couple $(\RWX,\RWL)$, with
$\RWX=(X_0=0,X_1,X_2,\ldots)$ and $\RWL=(L_0=0,L_1,L_2,\ldots)$. With these
notations, we can write
\[
\bbE \bigl[ \tq{x,\ell} \bigr]
= \ta{x,\ell}
= \RWP(\exists k:\, (X_k,L_k)=(x,\ell)).
\]
In general, the left-hand side of~\eqref{eq:L2TargetEstimate} does
not allow for a similar expression. Notice, however, that the attractivity
property implies the lower bound
\[
\bbE \bigl[ \tq{x,\ell}\, \tq{\xpr,\ell} \bigr] \geq
\ta{x,\ell}\ta{\xpr,\ell}\\
=
\RWPC\bigl(\exists k,\kpr:\, (X_k,L_k)=(x,\ell), (\Xpr_{\kpr},
\Lpr_{\kpr})=(\xpr,\ell)\bigr) ,
\]
where $\RWPC$ denotes the law of a couple of independent random walks
$(\RWX,\RWL)$ and $(\RWXpr,\RWLpr)$ as above.
It is important to observe that~\eqref{eq:L2TargetEstimate} would be an immediate consequence of the local limit
theorem for random walks if its left-hand side was replaced by $\ta{x,\ell}\ta{\xpr,\ell}$. To
prove~\eqref{eq:L2TargetEstimate}, we thus have to prove that, in the very weak disorder regime, this local limit
behaviour is not destroyed by the effective attractive interaction between the two paths resulting from averaging
$\tq{x,\ell}\tq{\xpr,\ell}$ over the disorder.

To facilitate the notation, define the events $R_{x,\ell} =\lbr \exists k :\,
(X_k,L_k)=(x,\ell)\rbr$ and $R_{\xpr,\ell} =\lbr \kpr:\, (\Xpr_k,L_k)=(\xpr
,\ell)\rbr$. Then, by the very same attractivity property of the potential, 
\begin{equation}
\label{eq:AttrBound} 
\bbE \bigl[ \tq{x,\ell}\, \tq{\xpr,\ell} \bigr] \leq 
\RWEC \bigl[ {\rm e}^{\Delta_\beta (\RWX ,\RWXpr
)}\One_{R_{x,\ell}}\One_{R_{\xpr,\ell}} \bigr], 
\end{equation}
where
\begin{equation}
\label{eq:Delta-beta}
 \Delta_\beta (\RWX ,\RWXpr ) = 
\log\Bigl\{ \sumtwo{\gamma\sim\RWX}{\gamma^\prime\sim\RWXpr} {\rm
e}^{\Phi_\beta (\gamma )+\Phi_\beta (\gamma^\prime ) - \Phi_\beta (\gamma ,
\gamma^\prime )}\Bigr\} . 
\end{equation}
Fix $\rho < 1/12$. Formula~(4.13 ) in~\cite{IV-Crossing} implies that
\[
\RWEC {\rm e}^{\frac{d+1}{\rho} \Delta_\beta (\RWX ,\RWXpr )} \leqs 1
\]
in the very weak disorder regime. Therefore, by H\"{o}lder inequality, 
\[
\bbE \bigl[ \tq{x,\ell}\, \tq{\xpr,\ell} \bigr] \leqs 
\lb \ta{x,\ell}\ta{\xpr,\ell}\rb^{\frac{d+1 -\rho}{d+1}} .
\]
The target~\eqref{eq:L2TargetEstimate} follows now by~\eqref{eq:Aupper}. \qed

\ACKNO{The authors would like to thank an anonymous referee for a very careful reading of 
the paper and for constructive criticism and suggestions on how the exposition should be 
clarified and improved.\newline
YV was partially supported by the Swiss National Science Foundation. DI was supported
by the Israeli Science Foundation grant 817/09. }
\end{document}